\documentclass{article}

\usepackage{arxiv}

\usepackage[utf8]{inputenc} 
\usepackage[T1]{fontenc}    
\usepackage{hyperref}       
\usepackage{url}            
\usepackage{booktabs}       
\usepackage{amsfonts}       
\usepackage{nicefrac}       
\usepackage{microtype}      
\usepackage{graphicx}
\usepackage{enumerate}
\usepackage[shortlabels]{enumitem}
\usepackage{longtable}
\usepackage{amsmath,amssymb,amsfonts,amsthm}
\usepackage{algorithm}
\usepackage{comment}
\usepackage{xcolor}
\usepackage[lofdepth,lotdepth]{subfig}
\usepackage{pgf,tikz}
\usepackage{pgfplots}

\newtheorem{theorem}{Theorem}
\newtheorem{cor}{Corollary}
\newtheorem{lemma}{Lemma}
\newtheorem{defn}{Definition}
\newtheorem{remark}{Remark}


\newcommand{\Int}{\mathrm{int}}

\title{Cross-points in the Dirichlet-Neumann method II: a geometrically convergent variant}

\author{
 Bastien Chaudet-Dumas \\
  Section of Mathematics\\
  University of Geneva\\
  \texttt{bastien.chaudet@unige.ch} \\
  \And
 Martin J. Gander \\
  Section of Mathematics\\
  University of Geneva\\
  \texttt{martin.gander@unige.ch} \\  
}

\begin{document}

\maketitle

\begin{abstract}
    When considered as a standalone iterative solver for elliptic boundary value problems, the Dirichlet-Neumann (DN) method is known to converge geometrically for domain decompositions into strips, even for a large number of subdomains. However, whenever the domain decomposition includes cross-points, i.e.$\!$ points where more than two subdomains meet, the convergence proof does not hold anymore as the method generates subproblems that might not be well-posed. 
    Focusing on a simple two-dimensional example involving one cross-point, we proposed in \cite{chaudet2022cross1} a decomposition of the solution into two parts: an even symmetric part and an odd symmetric part. Based on this decomposition, we proved that the DN method was geometrically convergent for the even symmetric part and that it was not well-posed for the odd symmetric part.
    Here, we introduce a new variant of the DN method which generates subproblems that remain well-posed for the odd symmetric part as well. Taking advantage of the symmetry properties of the domain decomposition considered, we manage to prove that our new method converges geometrically in the presence of cross-points. We also extend our results to the three-dimensional case, and present numerical experiments that illustrate our theoretical findings.
\end{abstract}

\keywords{domain decomposition \and Dirichlet-Neumann methods \and cross-points \and elliptic problem}

\section{Introduction} 
\label{sec:Intro}

The Dirichlet-Neumann (DN) method was first introduced in \cite{bjorstad1984solving,Bjorstad:1986:IMS}, making it one of the first non-overlapping domain decomposition methods. Even though it has been noted as early as in \cite{de1990analysis} that the method might not be well-posed in the presence of cross-points, this issue does not seem to have received a lot of attention since then. Indeed, the main research effort was first put on the analysis of the DN method as a preconditioner to accelerate Krylov solvers, see \cite{bjorstad1984solving,Bjorstad:1986:IMS,bramble1986iterative,widlund1986method,dryja1988method,marini1989relaxation} and also \cite{chan1994domain,Toselli:2004:DDM} for more general reviews on domain decomposition methods. In this context, the problematic behaviour of the DN method is masked by the efficiency of the Krylov solver. Besides, many authors, who use the DN method to solve more complex model problems, consider configurations without cross-points such as partitions with two subdomains \cite{maier2014dirichlet,song2021analysis,chaouqui2023nonlinear} or with many subdomains into strips \cite{chaouqui2018scalability,gander2021dirichlet}.

In the first part of this work \cite{chaudet2022cross1}, we provided a complete analysis of the DN method in a simple 2D configuration containing one cross-point: a square divided into four squared subdomains. By decomposing the solution into its even and odd symmetric parts and conducting separate analyses for these two cases, we managed to prove two important results. First, the DN method is geometrically convergent when dealing with the even symmetric part. Second, the DN method is not well-posed when dealing with the odd symmetric part. More precisely, the iterative process generates (non unique) solutions that are singular in the neighbourhood of the cross-point.

In this second part, our goal is to use this new knowledge on how the original DN method behaves with even/odd symmetric functions, in order to develop new transmission conditions specifically designed to treat the odd symmetric part. Our study reveals that introducing a new distribution of Dirichlet and Neumann transmission conditions enables us to recover the geometric convergence for the odd symmetric part. To the best of our knowledge, this is the first proof of geometric convergence at the continuous level for a non-overlapping domain decomposition method based on local transmission conditions in a configuration with cross-points, apart from our previous work \cite{chaudet2023cross} on the Neumann-Neumann method. The first optimized Schwarz methods, which were also based on local transmission conditions, have been proved to be convergent, but not geometrically. We refer to \cite{Lions:1990:SAM} for the positive definite elliptic case and \cite{Despres:1991:MDD} for the Helmholtz problem. In the case of the Helmholtz problem with cross-points, convergence (not necessarily geometric) has also been demonstrated when using higher order local transmission conditions \cite{modave2020non,despres2022optimized}. Finally, in other recent works dedicated to the Hemholtz problem, geometric convergence has been obtained for optimized Schwarz methods, but using non local transmission conditions \cite{claeys2022robust,collino2020exponentially}. 

This manuscript is organized as follows. In section \ref{sec:Preliminaries}, we present the model problem and gather some necessary results on the regularity of the solution to the Laplace problem in polygons. In section \ref{sec:NewDN}, we introduce our new variant of the DN method, and show that it converges geometrically for the even and odd symmetric parts. Then, in section \ref{sec:3D}, we extend this convergence result to the 3D case of a cube divided into four subdomains. Finally, we show some 2D and 3D numerical results in section \ref{sec:Numerics}.

\section{Model problem and preliminary notions} 
\label{sec:Preliminaries}

As in \cite{chaudet2022cross1},
\begin{figure}
  \centering
  \subfloat[Standard DNM.]{  
  \resizebox{0.27\textwidth}{!}{  
  \begin{tikzpicture}

    \draw[black, thin] (-2,-2) -- (2,-2) -- (2,2) -- (-2, 2) -- (-2,-2);
    \draw[black, densely dashed] (0,-2) -- (0,2); 
    \draw[black, densely dashed] (-2,0) -- (2,0); 
    
    \fill[gray, opacity=0.4] (0,-2) -- (2,-2) -- (2,0) -- (0,0) -- cycle; 
    \fill[gray, opacity=0.4] (-2,0) -- (0,0) -- (0,2) -- (-2,2) -- cycle; 
    
    \node[] at (0,-2) {\tiny{$\bullet$}};
    \node[] at (0,2) {\tiny{$\bullet$}};
    \node[] at (-2,0) {\tiny{$\bullet$}};
    \node[] at (2,0) {\tiny{$\bullet$}};
    \node[red] at (0,0) {\tiny{$\bullet$}};
    
    \node[] at (-1.25,-1.25) {\small{$\Omega_1$}};
    \node[] at (1.25,-1.25) {\small{$\Omega_2$}};
    \node[] at (1.25,1.25) {\small{$\Omega_3$}};
    \node[] at (-1.25,1.25) {\small{$\Omega_4$}};

    \node[] at (-0.3,-1) {\small{\textbf{D}}};
    \node[] at (-1,-0.3) {\small{\textbf{D}}};
    \node[] at (0.3,1) {\small{\textbf{D}}};
    \node[] at (1,0.3) {\small{\textbf{D}}};
    \node[] at (0.3,-1) {\small{\textbf{N}}};
    \node[] at (-1,0.3) {\small{\textbf{N}}};
    \node[] at (-0.3,1) {\small{\textbf{N}}};
    \node[] at (1,-0.3) {\small{\textbf{N}}};
    
  \end{tikzpicture}
  }
  \label{sub:SchDDstd}
  }
  \hspace{3em}
  \subfloat[New variant of the DNM.]{
  \resizebox{0.27\textwidth}{!}{  
  \begin{tikzpicture}

    \draw[black, thin] (-2,-2) -- (2,-2) -- (2,2) -- (-2, 2) -- (-2,-2);
    \draw[black, densely dashed] (0,-2) -- (0,2); 
    \draw[black, densely dashed] (-2,0) -- (2,0); 

    \fill[gray, opacity=0.4] (0,-2) -- (2,-2) -- (2,0) -- (0,0) -- cycle; 
    \fill[gray, opacity=0.4] (-2,0) -- (0,0) -- (0,2) -- (-2,2) -- cycle; 
        
    \node[] at (0,-2) {\tiny{$\bullet$}};
    \node[] at (0,2) {\tiny{$\bullet$}};
    \node[] at (-2,0) {\tiny{$\bullet$}};
    \node[] at (2,0) {\tiny{$\bullet$}};
    \node[red] at (0,0) {\tiny{$\bullet$}};
    
    \node[] at (-1.25,-1.25) {\small{$\Omega_1$}};
    \node[] at (1.25,-1.25) {\small{$\Omega_2$}};
    \node[] at (1.25,1.25) {\small{$\Omega_3$}};
    \node[] at (-1.25,1.25) {\small{$\Omega_4$}};

    \node[] at (-0.3,-1) {\small{\textbf{N}}};
    \node[] at (-1,-0.3) {\small{\textbf{D}}};
    \node[] at (0.3,1) {\small{\textbf{N}}};
    \node[] at (1,0.3) {\small{\textbf{D}}};
    \node[] at (0.3,-1) {\small{\textbf{D}}};
    \node[] at (-1,0.3) {\small{\textbf{N}}};
    \node[] at (-0.3,1) {\small{\textbf{D}}};
    \node[] at (1,-0.3) {\small{\textbf{N}}};
    
  \end{tikzpicture}
  }
  \label{sub:SchDDnew}  
  }
  \caption{Domain $\Omega$ divided into four square subdomains with transmission conditions of Dirichlet (\textbf{D}) or Neumann (\textbf{N}) type.}
  \label{fig:SchDD}
\end{figure}
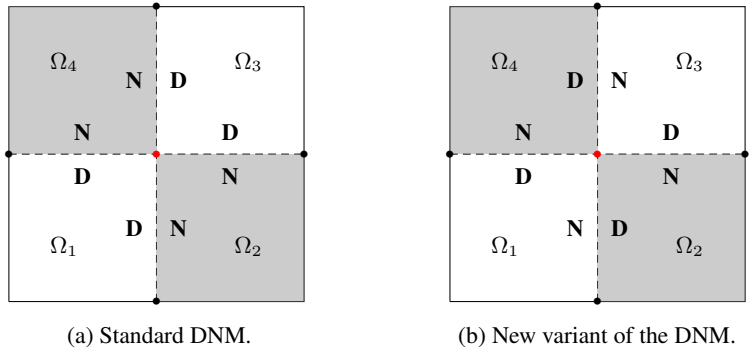
we consider for the domain $\Omega\subset \mathbb{R}^2$ the square $(-1,1)\times(-1,1)$. We divide it into four
non-overlapping square subdomains $\Omega_i$, $i\in
\mathcal{I}:=\{1,2,3,4\}$, of equal area, see Figure \ref{fig:SchDD}. With such a partition, there
is one interior cross-point (red dot), and there are also four
boundary cross-points (black dots). We denote
the interfaces between adjacent subdomains by $\Gamma_{ij}:=
\Int(\partial\Omega_i\cap\partial\Omega_j)$, and the skeleton of the
partition by $\Gamma:=\bigcup_{i,j} \overline{\Gamma}_{ij}$. Here, we need to artificially distinguish
between $\Gamma_{ij}$ and $\Gamma_{ji}$ for all $i$, $j$. More specifically, for each $(i,j)$, the same part of the boundary is referred to as $\Gamma_{ij}$ from the viewpoint of $\Omega_i$, and $\Gamma_{ji}$ from the viewpoint of $\Omega_j$. We further denote the
interior of the intersection between $\partial\Omega_i$ and the
boundary $\partial\Omega$ by $\partial\Omega_i^0:=\Int(\partial\Omega_i\cap\partial\Omega)$, and
the interior of the left, right, bottom and top sides of $\Omega$ by
$\partial\Omega_l$, $\partial\Omega_r$, $\partial\Omega_b$, $\partial\Omega_t$. Thus an arbitrary side
of $\Omega$ is denoted by $\partial\Omega_\sigma$, where $\sigma$ is in
the set of indices $\mathcal{S}:=\{b,r,t,l\}$. We also use the same
notation for an arbitrary side of a subdomain $\Omega_i$, namely
$\partial\Omega_{i,\sigma}$ with $\sigma\in \mathcal{S}$.

\subsection{Laplace model problem}
\label{subsec:ModelPb}

We consider the Laplace problem in $\Omega$ with Dirichlet, Robin, or mixed boundary conditions, that is: find $u$ solution to
\begin{equation}
    \left\{
    \begin{aligned}
        -\Delta u &= f \: \mbox{ in } \Omega , \\
        \mathcal{B}u &= g\: \mbox{ on } \partial\Omega ,
    \end{aligned}
    \right.
    \label{eqn:ModelPb}
\end{equation}
where the boundary condition is piecewise defined on $\partial\Omega$ as follows:
\begin{equation*}
	\begin{aligned}
		 u &= g^{\mathcal{D}} \: \mbox{ on } \partial\Omega_{\sigma} \mbox{ if } \sigma\in\mathcal{D}, \\
		 (\partial_n+p)u &= g^{\mathcal{R}}\: \mbox{ on } \partial\Omega_{\sigma} \mbox{ if } \sigma\in\mathcal{R},
	\end{aligned}
\end{equation*}
with $\mathcal{D}$ and $\mathcal{R}$ two (possibly empty) subsets of $\mathcal{S}$ such that $\mathcal{D}\cup\mathcal{R}=\mathcal{S}$. We make the following usual assumption on the regularity of the data.

\vspace{0.5em}
\textbf{Assumption 1.} We assume that $f\in H^{-1}(\Omega)$, $g^{\mathcal{D}}\in H^{\frac{1}{2}}(\partial\Omega)$,  $p\in L^\infty(\partial\Omega)$ ($p\geq 0$ a.e. on $\partial\Omega$), and $g^{\mathcal{R}}\in
H^{-\frac12}(\partial\Omega)$. Moreover, if $\mathcal{D}=\emptyset$, then $p$ is strictly positive on a subset of
$\partial\Omega$ of non-zero measure.
\vspace{0.5em}

Since $\Omega$ is Lipschitz, it is known that \eqref{eqn:ModelPb} admits a unique solution $u\in H^1(\Omega)$ when Assumption 1 holds.

\subsection{Regularity results}

In this study, we are interested in pointwise properties of the functions we deal with. In particular, we would like to work with solutions that are continuous on $\overline{\Omega}$, and whose normal derivatives are defined at least almost everywhere on the boundary. Such properties are obviously not satisfied if $u\in H^1(\Omega)$, but they are if $u\in H^2(\Omega)$ since $\Omega\subset\mathbb{R}^2$.

In the context of polygons, regardless of the type of boundary conditions, regularity results for elliptic PDEs are more technical when considering $H^m(\Omega)$ regularity ($m\geq 2$) compared to $W^{m,p}(\Omega)$ regularity for $p>2$. This is due to the fact that functions in $H^{\frac12}(\partial\Omega_\sigma)$ are not necessarily continuous, while the Sobolev embedding theorem guarantees that functions in $W^{1-\frac1p,p}(\partial\Omega_\sigma)$ are always continuous on $\overline{\partial\Omega}_\sigma$ for $p>2$. Therefore the compatibility relations, which are often expressed in a rather technical way in the $H^m$ case (see e.g. \cite[Theorem 4]{chaudet2022cross1}), are simply expressed as pointwise conditions in the $W^{m,p}$ case, see again \cite[Chapter 1]{grisvard2011elliptic}. This is why we are interested in $W^{2,p}$ regularity for some fixed $p>2$ rather than the more conventional $H^2$ regularity. 

In what follows, we will express sufficient conditions on the data for the solution to \eqref{eqn:ModelPb} to belong to $W^{2,p}(\Omega)$. In the model problem considered here, the boundary conditions are slightly more general than the ones considered in the first part \cite{chaudet2022cross1}. Namely we allow mixed boundary conditions of Dirichlet-Robin type, which are less standard than mixed conditions of Dirichlet-Neumann type. This implies that we need to adapt the regularity results from \cite{grisvard2011elliptic}. We consider three different configurations: the full Dirichlet case ($\mathcal{R}=\emptyset$), the full Robin case ($\mathcal{D}=\emptyset$) and the mixed Dirichlet-Robin case ($\mathcal{D}\neq\emptyset$, $\mathcal{R}\neq\emptyset$). Let us recall that we are dealing with a polygonal domain $\Omega$, thus the regularity on the whole boundary $\partial\Omega$ can be expressed by means of the regularity on each side of the polygon associated with compatibilty relations at the vertices. In the rectilinear case, vertices are actually corners, which we gather in the set $\mathcal{C}:=\left\{ (l,b), (b,r), (r,t), (t,l) \right\} $ consisting of four pairs of indices in $\mathcal{S}$.
As in \cite{chaudet2022cross1}, we split $\mathcal{C}$ into three disjoint subsets $\mathcal{C}=\mathcal{C}_{\mathcal{D}}\cup\mathcal{C}_{\mathcal{R}}\cup\mathcal{C}_{\mathcal{M}}$ corresponding to Dirichlet corners, Robin corners and mixed corners. More specifically, we have for each $(\sigma,\sigma')\in\mathcal{C}$,
\begin{equation*}
	(\sigma,\sigma') \in 
	\left\{
    \begin{aligned}
         \mathcal{C}_{\mathcal{D}} \: & \mbox{ if } \sigma,\sigma' \in \mathcal{D}, \\
         \mathcal{C}_{\mathcal{R}} \: & \mbox{ if } \sigma,\sigma' \in \mathcal{R}, \\
         \mathcal{C}_{\mathcal{M}} \: & \mbox{ if } \sigma \in \mathcal{D}, \sigma' \in \mathcal{R}.
    \end{aligned}
    \right.
\end{equation*}

\textbf{Notation.} For any function $h$ defined on $\partial\Omega$, we denote its restriction to a part of the boundary $\partial\Omega_\sigma$ by $h_\sigma:=h\vert_{\partial\Omega_\sigma}$, for each $\sigma\in\mathcal{S}$. Similarly, its normal derivative on $\partial\Omega_\sigma$ will be denoted by $\partial_{n_\sigma}h := \nabla h\vert_{\partial\Omega_\sigma}\cdot \,n_\sigma$. Furthermore, for any corner $(\sigma,\sigma')\in\mathcal{C}$, we denote the affine geometric transformations from the unit segment $(0,1)$ to $\partial\Omega_\sigma$ and $\partial\Omega_{\sigma'}$ by
\begin{equation*}
	\begin{aligned}
		\mathrm{x}^\sigma_{\sigma,\sigma'} : (0,1) &\longrightarrow \partial\Omega_\sigma \\
		 s &\longmapsto \mathrm{x}^\sigma_{\sigma,\sigma'}(s)
	\end{aligned}
	\qquad \mbox{ and } \qquad
	\begin{aligned}
		\mathrm{x}^{\sigma'}_{\sigma,\sigma'} : (0,1) &\longrightarrow \partial\Omega_{\sigma'} \\
		 s &\longmapsto \mathrm{x}^{\sigma'}_{\sigma,\sigma'}(s)
	\end{aligned}
\end{equation*}
such that the coordinates of the vertex at $(\sigma,\sigma')$ are given by $\mathrm{x}^{\sigma}_{\sigma,\sigma'}(0)=\mathrm{x}^{\sigma'}_{\sigma,\sigma'}(0)$. In what follows, when it is clear from the context that we work in the neighbourhood of some vertex $(\sigma,\sigma')$, we often refer to the function $h_\sigma\circ \mathrm{x}^\sigma_{\sigma,\sigma'} : (0,1)\rightarrow \mathbb{R}$ simply as $h_\sigma$. This will enable us to write equalities such as $h_\sigma=z_{\sigma'}$, meaning that the functions $h_\sigma\circ \mathrm{x}^\sigma_{\sigma,\sigma'}$ and $z_{\sigma'}\circ \mathrm{x}^{\sigma'}_{\sigma,\sigma'}$ are equal a.e. on $(0,1)$. As a result, when $h_\sigma$ is sufficiently smooth and can be extended to $\overline{\partial\Omega}_\sigma$, the scalar $h_\sigma(0)$ denotes the value of $h_\sigma$ at the vertex $(\sigma,\sigma')$.

\subsubsection{Full Dirichlet and full Robin cases}

These cases were already considered in \cite[Chapter 1]{grisvard2011elliptic}, so we just recall the corresponding regularity results.
\begin{theorem}[Dirichlet]
	If in addition to Assumption 1, we have $f\in L^p(\Omega)$, $g^{\mathcal{D}}_\sigma\in W^{2-\frac{1}{p},p}(\partial\Omega_\sigma)$ for all $\sigma\in \mathcal{S}$, and $g^{\mathcal{D}}_\sigma(0)=g^{\mathcal{D}}_{\sigma'}(0)$ for all $(\sigma,\sigma')\in\mathcal{C}$, then the solution $u$ to \eqref{eqn:ModelPb} is in $W^{2,p}(\Omega)$.
    \label{thm:RegDir}
\end{theorem}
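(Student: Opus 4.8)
The statement is a special case of the $L^p$ regularity theory for the Dirichlet Laplacian on a polygon, so the plan is to reduce to that theory and to verify that the hypotheses match the assumptions it requires. By Assumption 1 (here with $\mathcal{R}=\emptyset$) the problem \eqref{eqn:ModelPb} already has a unique weak solution $u\in H^1(\Omega)$; the task is only to upgrade its regularity. The first step is to remove the boundary data by a lifting argument. The conditions $g^{\mathcal{D}}_\sigma\in W^{2-\frac{1}{p},p}(\partial\Omega_\sigma)$ for all $\sigma\in\mathcal{S}$, together with the corner matchings $g^{\mathcal{D}}_\sigma(0)=g^{\mathcal{D}}_{\sigma'}(0)$ for all $(\sigma,\sigma')\in\mathcal{C}$, are exactly those characterizing the Dirichlet trace space of $W^{2,p}(\Omega)$ on the polygonal boundary $\partial\Omega$: on each side one needs $W^{2-\frac{1}{p},p}$ regularity, and at a corner the only matching constraint is continuity of the data, because $W^{2-\frac{1}{p},p}(0,1)\hookrightarrow C^1([0,1])$ for $p>2$ while the two one-sided tangential derivatives at a right angle are independent components of a single gradient vector and hence unconstrained. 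Consequently there exists $G\in W^{2,p}(\Omega)$ with $G\vert_{\partial\Omega_\sigma}=g^{\mathcal{D}}_\sigma$ for every $\sigma$. Setting $v:=u-G$, the function $v\in H^1(\Omega)$ solves $-\Delta v=f+\Delta G$ in $\Omega$ with $v=0$ on $\partial\Omega$, and the right-hand side lies in $L^p(\Omega)$ since both $f$ and $\Delta G$ do; it therefore suffices to prove $v\in W^{2,p}(\Omega)$.

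For the homogeneous problem I would invoke the Kondrat'ev--Grisvard corner decomposition recalled in \cite[Chapter 1]{grisvard2011elliptic}: near a corner of opening $\omega$, the solution of the homogeneous Dirichlet problem with $L^p$ right-hand side splits into a $W^{2,p}$ part plus a finite linear combination of singular functions $r^{\lambda}\phi(\theta)$ with exponents $\lambda=k\pi/\omega$, $k\geq 1$, the sum ranging only over those $\lambda$ lying in the interval $(0,\,2-\frac{2}{p})$ (the endpoint value $\lambda=2-\frac{2}{p}$ being excluded to rule out a logarithmic borderline case). Every corner of the square $\Omega$ has opening $\omega=\frac{\pi}{2}$, so the admissible exponents are $\lambda=2k$, $k\geq1$; since $p<\infty$ we have $2-\frac{2}{p}<2$, hence no exponent lies in $(0,\,2-\frac{2}{p})$ and, moreover, $2-\frac{2}{p}$ never coincides with one of the exponents. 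Thus the singular part is absent at each of the four corners, $v\in W^{2,p}(\Omega)$, and therefore $u=v+G\in W^{2,p}(\Omega)$.

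The analysis involved is entirely classical, so the only point that genuinely needs attention — the \emph{main obstacle}, such as it is — is bookkeeping: one must confirm that the compatibility conditions imposed in the statement are precisely those appearing in the $W^{2,p}$ theory (they are weaker and cleaner than their $H^2$ counterparts exactly because $W^{2-\frac{1}{p},p}$ of a side embeds into $C^0$, indeed into $C^1$ for $p>2$, so no tangential-derivative matching is needed), and that the square produces no borderline exponent $\lambda=2-\frac{2}{p}$ for any admissible $p$ — both of which follow immediately from the fact that all corner openings equal $\frac{\pi}{2}$.
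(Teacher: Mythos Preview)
Your proposal is correct and is precisely the classical Grisvard argument. The paper does not give a proof of this statement at all; it simply records it as a known result and points to \cite[Chapter 1]{grisvard2011elliptic}, which is exactly the source whose lifting-plus-corner-singularity mechanism you have sketched.
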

\begin{theorem}[Robin]
	If in addition to Assumption 1, we have $f\in L^p(\Omega)$, $p_\sigma\in C^\infty(\overline{\partial\Omega}_\sigma)$ and $g^{\mathcal{R}}_\sigma\in W^{1-\frac{1}{p},p}(\partial\Omega_\sigma)$ for all $\sigma\in \mathcal{S}$, then the solution $u$ to \eqref{eqn:ModelPb} is in $W^{2,p}(\Omega)$.
    \label{thm:RegRob}
\end{theorem}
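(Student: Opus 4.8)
The plan is to obtain this as an application of the $L^p$-regularity theory for second-order elliptic boundary value problems in plane polygons, as developed in \cite[Chapter 1]{grisvard2011elliptic}; the authors' own phrasing (``we just recall'') signals that the only things genuinely to be checked are that our particular geometry produces no obstruction and that the variable — rather than constant — Robin coefficient does not cause a loss of regularity. Since $\Omega$ is the square, each of its four vertices has opening $\omega=\pi/2$, and in the full Robin case $\mathcal{D}=\emptyset$ all four corners lie in $\mathcal{C}_{\mathcal{R}}$, so only one type of corner needs to be analysed.

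First I would recall the corner (Kondrat'ev) picture. Freezing and homogenizing at a corner, the boundary operator $(\partial_n+p)$ has leading part $\partial_n$, so the singular exponents of the model wedge problem are those of the Laplacian with homogeneous Neumann conditions on a sector of opening $\pi/2$, namely $\lambda_k=k\pi/\omega=2k$, $k\ge 1$ (the value $\lambda_0=0$ corresponds to constants and plays no role). A function behaving like $r^{\lambda}$ near a vertex lies in $W^{2,p}$ exactly when $\lambda>2-\tfrac2p$; since $p>2$ we have $2-\tfrac2p<2=\lambda_1$, hence no singular function obstructs $W^{2,p}$ regularity and no compatibility relation is needed at the elements of $\mathcal{C}$ — which is precisely why the present statement, unlike Theorem~\ref{thm:RegDir}, carries no condition at the corners. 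Combining this corner conclusion with the classical interior and edge $W^{2,p}$ estimates for $-\Delta$ (Agmon--Douglis--Nirenberg/Calderón--Zygmund, applicable away from the vertices because $f\in L^p(\Omega)$ and the sides $\partial\Omega_\sigma$ are straight), Grisvard's theory delivers $u\in W^{2,p}(\Omega)$ for data $f\in L^p(\Omega)$, $g^{\mathcal R}_\sigma\in W^{1-\frac1p,p}(\partial\Omega_\sigma)$ in the ``constant-coefficient'' case where each $\partial\Omega_\sigma$ carries a pure Neumann condition $\partial_{n_\sigma}u=\widetilde g_\sigma$.

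To reduce the variable-coefficient case to this, I would run a short bootstrap. Since $g^{\mathcal R}_\sigma\in W^{1-\frac1p,p}(\partial\Omega_\sigma)\hookrightarrow H^{\frac12}(\partial\Omega_\sigma)$ for $p\ge 2$ and $f\in L^p(\Omega)\subset L^2(\Omega)$, the $H^2$-regularity result for the Robin problem on a convex polygon (again \cite[Chapter 1]{grisvard2011elliptic}; here the relevant $H^2$ threshold is $\lambda>1$, and $\lambda_1=2>1$, so no singular part appears) gives $u\in H^2(\Omega)$, whence $u|_{\partial\Omega_\sigma}\in H^{\frac32}(\partial\Omega_\sigma)\hookrightarrow W^{1-\frac1p,p}(\partial\Omega_\sigma)$. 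Because $p_\sigma\in C^\infty(\overline{\partial\Omega}_\sigma)$, multiplication preserves this space, so $\widetilde g^{\mathcal R}_\sigma:=g^{\mathcal R}_\sigma-p_\sigma\,u|_{\partial\Omega_\sigma}\in W^{1-\frac1p,p}(\partial\Omega_\sigma)$; now $u$ solves $-\Delta u=f$ in $\Omega$ with the pure-Neumann data $\partial_{n_\sigma}u=\widetilde g^{\mathcal R}_\sigma$, and the constant-coefficient statement above yields $u\in W^{2,p}(\Omega)$.

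I expect the only delicate points to be bookkeeping ones: checking that every Sobolev embedding used in the bootstrap is valid for all admissible $p>2$ (they are, with room to spare, since $1-\tfrac2p\ge 0$), and confirming that no compatibility relation is tacitly required by Grisvard's $W^{2,p}$ statement at a right-angle Neumann/Robin vertex — which is exactly the content of the inequality $\lambda_1=2>2-\tfrac2p$ and therefore uses in an essential way that every interior angle of $\Omega$ equals $\pi/2$. Everything else — the $H^2$ step and the corner expansion — is entirely standard once the geometry is fixed.
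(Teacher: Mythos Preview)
Your proposal is correct and essentially aligned with the paper's treatment: the paper gives no proof of this theorem at all, simply stating that the full Dirichlet and full Robin cases ``were already considered in \cite[Chapter 1]{grisvard2011elliptic}, so we just recall the corresponding regularity results.'' What you have written is a faithful unpacking of exactly that reference for the present geometry (right-angle corners, hence Neumann singular exponents $\lambda_k=2k>2-2/p$), together with a clean bootstrap to absorb the variable coefficient $p_\sigma$; there is nothing to compare against, and your argument is sound.
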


\subsubsection{Mixed Dirichlet-Robin case}

The specific case of mixed Dirichlet-Robin boundary conditions has not received a lot of attention. We mention \cite{mghazli1992regularity}, where the author proves that the structure of the solution in the Dirichlet-Robin case is the same as the one in the mixed Dirichlet-Neumann case obtained in \cite[Chapter 4]{grisvard2011elliptic}. However, no expressions for the compatibility relations ensuring existence of a more regular solution are provided. In this subsection, our goal is to obtain such relations in the case of $W^{2,p}$ regularity.

Before doing so, let us recall the regularity result from \cite[Chapter 4]{grisvard2011elliptic} in the case of mixed Dirichlet-Neumann homogeneous boundary conditions.
\begin{theorem}[Homogeneous Dirichlet-Neumann]
	Let us consider the problem
	\begin{equation}
    		\left\{
    		\begin{aligned}
        		-\Delta v &= k \: \mbox{ in } \Omega , \\
        		v &= 0\: \mbox{ on } \partial\Omega_\sigma \mbox{ if } \sigma\in\mathcal{D}, \\
        		\partial_n v &= 0\: \mbox{ on } \partial\Omega_\sigma \mbox{ if } \sigma\in\mathcal{R}.
    		\end{aligned}
    		\right.
    		\label{eqn:HomogDNPb}
	\end{equation}
	If we assume that $k\in L^p(\Omega)$, then the solution $v$ to \eqref{eqn:HomogDNPb} is in $W^{2,p}(\Omega)$.
\label{thm:RegHomogDNPb}
\end{theorem}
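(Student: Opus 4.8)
The plan is to reduce the statement to the known corner-by-corner regularity theory for the Laplacian on a polygon with mixed boundary conditions, as developed in \cite[Chapter 4]{grisvard2011elliptic}. First I would recall that, since $\Omega$ is a bounded polygonal domain and $k\in L^p(\Omega)$ with $p>2$ fixed, the variational solution $v\in H^1(\Omega)$ of \eqref{eqn:HomogDNPb} exists and is unique (the mixed problem is coercive because the Dirichlet part of the boundary has positive measure, or, in the pure Neumann corner configuration, after the usual mean-value normalization). Away from the corners in $\mathcal{C}$, interior elliptic regularity and boundary regularity for smooth portions of $\partial\Omega$ with either a pure Dirichlet or a pure Neumann condition immediately give $v\in W^{2,p}_{\mathrm{loc}}$ there; so the whole question is local near each corner $(\sigma,\sigma')\in\mathcal{C}$.

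Next I would treat each corner according to its type. The opening angle at every corner of $\Omega$ is $\omega=\pi/2$. For a $\mathcal{C}_{\mathcal{D}}$ corner (Dirichlet–Dirichlet) or a $\mathcal{C}_{\mathcal{R}}$ corner (here Neumann–Neumann, since $p\equiv 0$ in \eqref{eqn:HomogDNPb}), the singular exponents are $\lambda = k\pi/\omega = 2k$, $k\ge 1$, so the smallest is $\lambda=2$; for a $\mathcal{C}_{\mathcal{M}}$ mixed Dirichlet–Neumann corner the exponents are $\lambda=(k-\tfrac12)\pi/\omega=2k-1$, $k\ge 1$, so the smallest is $\lambda=1$. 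In all three cases the critical threshold for $W^{2,p}$ membership is $\lambda > 2-2/p$ (equivalently, the singular function $r^{\lambda}$ fails to be in $W^{2,p}$ near the corner exactly when $\lambda \le 2-2/p$). Since $p>2$ we have $2-2/p<2$, so $\lambda=2$ clears the threshold at the Dirichlet and Neumann corners; and at a mixed corner, $\lambda=1$ would be below the threshold \emph{only if} $1\le 2-2/p$, i.e. $p\le 2$ — which is excluded. Hence for every corner the first singular exponent exceeds $2-2/p$, no singular function appears in the decomposition of $v$, and the regular part lies in $W^{2,p}$; invoking \cite[Chapter 4]{grisvard2011elliptic} (the version covering mixed Dirichlet–Neumann data, with homogeneous boundary data so that no compatibility conditions on the boundary data enter) gives $v\in W^{2,p}(\Omega)$.

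The one genuinely delicate point is the mixed corner: for $H^2$ regularity ($p=2$) the exponent $\lambda=\tfrac12$ is below $2-2/2=1$ and a singularity of the form $r^{1/2}\sin(\theta/2)$ generically appears, which is exactly why the problem is subtle; what makes the statement true here is precisely that we have fixed $p>2$ \emph{strictly}, pushing the threshold $2-2/p$ strictly below $1$ so that the first mixed-corner exponent $\lambda=1$ is admissible. So the main obstacle to watch is ensuring that the quoted Grisvard result is applied with the strict inequality $p>2$ and with the correct list of admissible exponents at a $\pi/2$ mixed corner, and checking there is no exponent in the half-open interval $(0, 2-2/p]$; once that is verified the conclusion is immediate. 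I would also note that the homogeneity of the boundary data is what lets us avoid any trace/compatibility discussion, so the proof is clean — the entire content is the corner exponent count.
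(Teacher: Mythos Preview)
The paper does not prove this theorem at all: it is simply quoted as a known result from \cite[Chapter 4]{grisvard2011elliptic}. So there is no ``paper's proof'' to compare against, and your sketch is an attempt to supply what the paper merely cites.

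That said, your argument contains a genuine error at the crucial step. You claim that at a mixed corner the first exponent $\lambda=1$ lies below the $W^{2,p}$ threshold $2-2/p$ only when $p\le 2$. The inequality goes the other way: $1\le 2-2/p$ is equivalent to $2/p\le 1$, i.e.\ $p\ge 2$. For the regime $p>2$ fixed in the paper one has $2-2/p\in(1,2)$, so the first mixed exponent $\lambda=1$ is \emph{strictly below} the threshold, not above it. Your last paragraph repeats the same mistake (``pushing the threshold $2-2/p$ strictly below $1$'' --- it is pushed above $1$), and also slips into the wrong geometry: $\lambda=\tfrac12$ and the singular function $r^{1/2}\sin(\theta/2)$ belong to a mixed corner of opening $\pi$, not the $\pi/2$ corners of the square.

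The reason the theorem is nevertheless true is a point you do not mention: at a right-angle mixed corner the first singular function is $r\sin\theta$ (equivalently $y$, or $x$), which is a polynomial and therefore lies in $W^{2,p}(\Omega)$ for every $p$. In Grisvard's decomposition this ``singular'' term is absorbed into the regular part, and one must also verify that no logarithmic companion $r\log r\,\phi(\theta)$ is generated (this is part of the careful analysis in \cite[Chapter~4]{grisvard2011elliptic}). Once the $\lambda=1$ contribution is recognised as regular, the next exponent is $\lambda=3>2-2/p$ and your exponent count goes through. So the missing idea is precisely this integer-exponent/polynomial observation; without it, the inequality argument you give actually points the wrong way for $p>2$.
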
 
The case of mixed Dirichlet-Robin homogeneous boundary conditions, which will be useful in the proof of the main result of this subsection, can be deduced from the previous result.
\begin{cor}[Homogeneous Dirichlet-Robin]
	Let us consider the problem
	\begin{equation}
    		\left\{
    		\begin{aligned}
        		-\Delta v &= k \: \mbox{ in } \Omega , \\
        		v &= 0\: \mbox{ on } \partial\Omega_\sigma \mbox{ if } \sigma\in\mathcal{D}, \\
        		(\partial_n + p) v &= 0\: \mbox{ on } \partial\Omega_\sigma \mbox{ if } \sigma\in\mathcal{R}.
    		\end{aligned}
    		\right.
    		\label{eqn:HomogPb}
	\end{equation}
	If, in addition to the assumptions of subsection \ref{subsec:ModelPb}, we assume that $k\in L^p(\Omega)$ and $p_\sigma\in C^\infty(\overline{\partial\Omega}_\sigma)$ for all $\sigma \in \mathcal{R}$, then the solution $v$ to \eqref{eqn:HomogPb} is in $W^{2,p}(\Omega)$.
\label{cor:RegHomogPb}
\end{cor}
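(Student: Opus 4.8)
The plan is to reduce Corollary \ref{cor:RegHomogPb} to the homogeneous Dirichlet-Neumann result of Theorem \ref{thm:RegHomogDNPb} by absorbing the zeroth-order Robin term into the right-hand side. First I would invoke Assumption 1 together with the strict positivity hypothesis on $p$ (active here because $\mathcal{R}\neq\emptyset$ in the mixed case, or separately noting that $\mathcal{D}\neq\emptyset$ suffices for well-posedness) to guarantee that \eqref{eqn:HomogPb} has a unique solution $v\in H^1(\Omega)$. Since $\Omega\subset\mathbb{R}^2$ and the boundary conditions are of mixed Dirichlet-Neumann type after the reduction, the natural route is a bootstrap: show $v$ has enough regularity that its trace on each $\partial\Omega_\sigma$, $\sigma\in\mathcal{R}$, is well-defined and lies in a space on which multiplication by $p_\sigma\in C^\infty(\overline{\partial\Omega}_\sigma)$ is bounded.

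The key steps, in order, would be: (i) Write $v$ as the solution of the Dirichlet-Neumann problem \eqref{eqn:HomogDNPb} with modified data $k$ replaced by $k$ in the interior and $\partial_n v = -p\,v =: \tilde g$ on $\partial\Omega_\sigma$ for $\sigma\in\mathcal{R}$ — i.e. treat $v$ as solving an inhomogeneous-Neumann mixed problem whose Neumann datum depends on $v$ itself. (ii) Establish that $\tilde g_\sigma = -p_\sigma v_\sigma \in W^{1-\frac1p,p}(\partial\Omega_\sigma)$: since $v\in H^1(\Omega)$ we first only get $v_\sigma\in H^{1/2}(\partial\Omega_\sigma)$, which is not enough, so one needs an intermediate regularity step. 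Here I would use the known structure of solutions to the mixed problem in a polygon (as recalled via \cite{mghazli1992regularity} and \cite[Chapter 4]{grisvard2011elliptic}): $v = v_{\mathrm{reg}} + \sum_c c_c\, S_c$ where $v_{\mathrm{reg}}\in W^{2,p}$ and the $S_c$ are explicit corner singular functions behaving like $r_c^{\lambda_c}$ with $\lambda_c\geq \frac12$; in the rectilinear case the only possibly-singular exponent at a mixed corner is $\lambda = \frac12$, which still gives $v\in W^{1,p}(\Omega)$ for some $p>2$ near that corner (since $r^{1/2}\in W^{1,p}$ iff $p<4$), hence $v\in C^0(\overline\Omega)$ and $v_\sigma\in W^{1-\frac1p,p}$ by trace theory for $p\in(2,4)$. (iii) With $\tilde g\in W^{1-\frac1p,p}$, reduce the inhomogeneous Neumann data to zero by subtracting a lifting $w\in W^{2,p}(\Omega)$ with $\partial_n w = \tilde g$ on the Robin sides and $w=0$ on the Dirichlet sides (such a lifting exists by standard extension results once the compatibility relations at corners are checked), so that $v-w$ solves a problem of the form \eqref{eqn:HomogDNPb} with right-hand side $k+\Delta w\in L^p(\Omega)$; apply Theorem \ref{thm:RegHomogDNPb} to conclude $v-w\in W^{2,p}(\Omega)$, whence $v\in W^{2,p}(\Omega)$.

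The main obstacle I expect is step (ii): bridging the gap between the a priori $H^1$ regularity and the $W^{1-1/p,p}$ trace regularity needed to make sense of $p\,v$ as an admissible Neumann datum. This is genuinely the heart of the matter because the Robin term is only lower-order in a formal sense — one still has to control the corner behaviour of $v$, and in particular verify that the critical exponent $\lambda=\frac12$ at mixed Dirichlet-Robin corners does not obstruct $W^{1,p}$-membership for $p$ slightly above $2$. A secondary technical point is checking the corner compatibility conditions for the lifting $w$ in step (iii), i.e. that $\tilde g_\sigma(0)$ matches the (zero) Dirichlet value appropriately at each corner of $\mathcal{C}_{\mathcal{M}}$ and that $\tilde g$ satisfies the relevant matching at corners in $\mathcal{C}_{\mathcal{R}}$; these follow from $v=0$ on Dirichlet sides together with continuity of $v$, but must be stated carefully. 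Once these are in place, the reduction to Theorem \ref{thm:RegHomogDNPb} is routine.
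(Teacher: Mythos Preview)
Your overall strategy---rewrite the Robin condition as an inhomogeneous Neumann datum $\partial_n v = -p\,v$, lift the resulting boundary data by a function $w\in W^{2,p}(\Omega)$, and apply Theorem~\ref{thm:RegHomogDNPb} to $v-w$---is exactly the route taken in the paper. The paper, however, sidesteps your bootstrap (ii) entirely: it simply asserts at the outset that ``from standard results'' the unique solution $v$ already lies in $W^{1,p}(\Omega)$, so that $v_\sigma\in W^{1-1/p,p}(\partial\Omega_\sigma)\subset C^0(\overline{\partial\Omega}_\sigma)$ is available immediately and the Neumann datum $-p_\sigma v_\sigma$ has the required regularity without further work. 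The bulk of the paper's proof is then devoted to the explicit construction of the lifting $w$: it prescribes local Dirichlet/Neumann trace pairs near each corner, treating the three corner types $\mathcal{C}_{\mathcal{D}}$, $\mathcal{C}_{\mathcal{R}}$, $\mathcal{C}_{\mathcal{M}}$ separately, checks the compatibility relations \eqref{eqn:CompRel:all}, and glues them with a smooth partition of unity. Your sketch defers this to ``standard extension results'', but in the paper this is where the effort goes.

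One factual correction to your step (ii): in the square $\Omega$ every interior corner angle is $\pi/2$, and the leading mixed Dirichlet--Neumann exponent at such a corner is $\lambda=1$, not $\lambda=\tfrac12$ (the value $\tfrac12$ corresponds to an opening angle $\pi$). The associated singular function $r\sin\theta$ is a harmonic polynomial, hence smooth, so there is no genuine corner singularity obstructing $W^{2,p}$-regularity for any $p>2$; your restriction to $p<4$ is unnecessary in this geometry. This is precisely why Theorem~\ref{thm:RegHomogDNPb} holds without restriction on $p$, and why the paper can take $v\in W^{1,p}$ for granted rather than bootstrapping.
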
 

\begin{proof}
	We already know from standard results that there exists a unique $v\in W^{1,p}(\Omega)$ solution to \eqref{eqn:HomogPb}. Now, if $v$ solves \eqref{eqn:HomogPb}, it also solves a mixed Dirichlet-Neumann problem with non-homogeneous boundary conditions:
		\begin{equation}
    		\left\{
    		\begin{aligned}
        		-\Delta v &= k \: \mbox{ in } \Omega , \\
        		v &= 0\: \mbox{ on } \partial\Omega_\sigma \mbox{ if } \sigma\in\mathcal{D}, \\
        		\partial_n v &= -pv \: \mbox{ on } \partial\Omega_\sigma \mbox{ if } \sigma\in\mathcal{R}.
    		\end{aligned}
    		\right.
    		\label{eqn:NonHomogDNPb}
	\end{equation}
	The following step is to find a extension for these boundary conditions in $W^{2,p}(\Omega)$, which will enable us to recover the homogeneous case and apply Theorem \ref{thm:RegHomogDNPb}. In order to do so, we use the compatibility relations for mixed Dirichlet-Neumann conditions provided by \cite[Theorem 1.5.2.8]{grisvard2011elliptic}. More specifically, we know that a function $w$ belongs to $W^{2,p}(\Omega)$ if and only if its traces and normal derivatives satisfy
	\begin{subequations} \label{eqn:CompRel:all}
	\begin{align}
		\left( w_\sigma, \partial_{n_\sigma}w \right)& \in W^{2-\frac1p,\frac1p}(\partial\Omega_\sigma)\times W^{1-\frac1p,\frac1p}(\partial\Omega_\sigma), \label{eqn:CompRel:1}\\
		w_\sigma(0)&=w_{\sigma'}(0) \mbox{ for all } (\sigma,\sigma')\in\mathcal{C}, \label{eqn:CompRel:2}\\
		(w_\sigma)'(0)&=\partial_{n_{\sigma'}}w(0) \mbox{ for all } (\sigma,\sigma')\in\mathcal{C}. \label{eqn:CompRel:3}
	\end{align}
	\end{subequations}	
	Here, we need to build pairs of functions $(h_\sigma,z_\sigma)$, representing the Dirichlet and Neumann traces on each $\partial\Omega_\sigma$, that satisfy these three conditions and verify the boundary conditions in \eqref{eqn:NonHomogDNPb}. Let $(\sigma,\sigma')\in\mathcal{C}$, we begin with building local Dirichlet and Neumann traces on each side of the vertex $(\sigma,\sigma')$, denoted by $(h_{\sigma,\sigma'}^\sigma$, $z_{\sigma,\sigma'}^\sigma)$ on $\partial\Omega_\sigma$ and $(h_{\sigma,\sigma'}^{\sigma'}$, $z_{\sigma,\sigma'}^{\sigma'})$ on $\partial\Omega_{\sigma'}$. These traces depend on the type of vertex considered:
\begin{itemize}
	\item if $(\sigma,\sigma')\in\mathcal{C}_{\mathcal{D}}$, we set
	\begin{equation*}
		h_{\sigma,\sigma'}^\sigma = 0, \quad h_{\sigma,\sigma'}^{\sigma'} = 0, \quad z_{\sigma,\sigma'}^\sigma = 0, \quad z_{\sigma,\sigma'}^{\sigma'} = 0;
	\end{equation*}
	\item if $(\sigma,\sigma')\in\mathcal{C}_{\mathcal{R}}$, we set for all $s\in(0,1)$
	\begin{equation*}
	\begin{aligned}
		h_{\sigma,\sigma'}^\sigma &= -(p_{\sigma'}v_{\sigma'})(0)s, \qquad & z_{\sigma,\sigma'}^\sigma &= -p_\sigma v_\sigma, \\
		h_{\sigma,\sigma'}^{\sigma'} &= -(p_\sigma v_\sigma)(0)s, \qquad & z_{\sigma,\sigma'}^{\sigma'} &= -p_{\sigma'} v_{\sigma'};
	\end{aligned}
	\end{equation*}
	\item if $(\sigma,\sigma')\in\mathcal{C}_{\mathcal{M}}$, say $\sigma\in\mathcal{D}$ and $\sigma'\in\mathcal{R}$, we set
	\begin{equation*}
		h_{\sigma,\sigma'}^\sigma = 0, \quad h_{\sigma,\sigma'}^{\sigma'} = 0, \quad z_{\sigma,\sigma'}^\sigma = 0, \quad z_{\sigma,\sigma'}^{\sigma'} = -p_{\sigma'}v_{\sigma'}.
	\end{equation*}	
\end{itemize}
Given the regularities of $p$ and $v$, the definitions above ensure that
\begin{equation}
	(h_{\sigma,\sigma'}^\sigma, z_{\sigma,\sigma'}^\sigma) \in W^{2-\frac1p,\frac1p}(\partial\Omega_\sigma)\times W^{1-\frac1p,\frac1p}(\partial\Omega_\sigma),
	\label{eqn:localregcheck}
\end{equation}
and the same regularity property holds for $(h_{\sigma,\sigma'}^{\sigma'}, z_{\sigma,\sigma'}^{\sigma'})$. Moreover, it can be checked that
\begin{equation}
	h_{\sigma,\sigma'}^\sigma(0) = h_{\sigma,\sigma'}^{\sigma'}(0), \quad
	(h_{\sigma,\sigma'}^\sigma)'(0) = z_{\sigma,\sigma'}^{\sigma'}(0), \quad
	(h_{\sigma,\sigma'}^{\sigma'})'(0) = z_{\sigma,\sigma'}^{\sigma}(0).
	\label{eqn:localCRcheck}
\end{equation}
Finally, we also have
\begin{equation}
	\left\{
	\begin{aligned}
		h_{\sigma,\sigma'}^\sigma &= 0 \mbox{ if } \sigma\in\mathcal{D}, \\
		z_{\sigma,\sigma'}^\sigma &= -p_\sigma v_\sigma \mbox{ if } \sigma\in\mathcal{R},
	\end{aligned}
	\right.
	\label{eqn:localHomogBCcheck}
\end{equation}
and the same holds for $(h_{\sigma,\sigma'}^{\sigma'}, z_{\sigma,\sigma'}^{\sigma'})$.
Now that we have defined traces on both sides of each vertex, we are left with two pairs of Dirichlet and Neumann traces on each side of the polygon. Indeed, for any $\sigma\in\mathcal{S}$, with neighbouring sides $\sigma'$ and $\sigma''$, we have one pair $(h_{\sigma,\sigma'}^\sigma, z_{\sigma,\sigma'}^\sigma)$ corresponding to the vertex $(\sigma,\sigma')$, and one pair $(h_{\sigma,\sigma''}^\sigma, z_{\sigma,\sigma''}^\sigma)$ corresponding to the other vertex $(\sigma,\sigma'')$. In order to recombine these traces, we introduce a $C^\infty$ partition of unity $(\psi_{\sigma,\sigma'}^\sigma, \psi_{\sigma,\sigma''}^\sigma)$ on $\partial\Omega_\sigma$ such that $\psi_{\sigma,\sigma'}^\sigma \equiv 1$ in the neighbourhood of $(\sigma,\sigma')$ and $\psi_{\sigma,\sigma''}^\sigma \equiv 1$ in the neighbourhood of $(\sigma,\sigma'')$. This enables us to define a single pair $(h_\sigma,z_\sigma)$ of Dirichlet and Neumann traces on $\partial\Omega_\sigma$ as follows:
\begin{equation}
\begin{aligned}
	h_\sigma &:= h_{\sigma,\sigma'}^\sigma \psi_{\sigma,\sigma'}^\sigma + h_{\sigma,\sigma''}^\sigma \psi_{\sigma,\sigma''}^\sigma, \\
	z_\sigma &:= z_{\sigma,\sigma'}^\sigma \psi_{\sigma,\sigma'}^\sigma + z_{\sigma,\sigma''}^\sigma \psi_{\sigma,\sigma''}^\sigma. 
\end{aligned}
\label{eqn:DefDNtraces}
\end{equation}
Since we have used a smooth partition of unity, the fact that $(h_\sigma,z_\sigma)$ satisfies conditions \eqref{eqn:CompRel:all} is a direct consequence of \eqref{eqn:localregcheck} and \eqref{eqn:localCRcheck}.  Therefore, it follows from \cite[Theorem 1.5.2.8]{grisvard2011elliptic} that there exists a unique $w\in W^{2,p}(\Omega)$ such that $w_\sigma=h_\sigma$ and $\partial_{n_\sigma}w=z_\sigma$ on each $\partial\Omega_\sigma$. Furthermore, we deduce from \eqref{eqn:localHomogBCcheck} that $w$ verifies the boundary conditions of the non-homogeneous Dirichlet-Neumann problem \eqref{eqn:NonHomogDNPb}.

Now, introducing $\tilde{v}:=v-w$ with $v$ the solution to \eqref{eqn:NonHomogDNPb} and $w$ as defined above, we obtain that $\tilde{v}$ solves a Dirichlet-Neumann problem with homogeneous boundary conditions
\begin{equation*}
    \left\{
    \begin{aligned}
        -\Delta \tilde{v} &= k+\Delta w \: \mbox{ in } \Omega , \\
        \tilde{v} &= 0\: \mbox{ on } \partial\Omega_\sigma \mbox{ if } \sigma\in\mathcal{D}, \\
        	\partial_n \tilde{v} &= 0 \: \mbox{ on } \partial\Omega_\sigma \mbox{ if } \sigma\in\mathcal{R}.
    \end{aligned}
    \right.
\end{equation*}
Since $k+\Delta w \in L^p(\Omega)$, it follows from Theorem \ref{thm:RegHomogDNPb} that $\tilde{v}$ exists and is unique in $W^{2,p}(\Omega)$. Thus $v=\tilde{v}+w$ is also in $W^{2,p}(\Omega)$.
\end{proof}

We can now state the regularity result for the case of mixed Dirichlet-Robin non-homogeneous boundary conditions.
\begin{theorem}[Dirichlet-Robin]
	If, in addition to Assumption 1, we assume that $f\in L^p(\Omega)$, $g^{\mathcal{D}}_\sigma\in W^{2-\frac{1}{p},p}(\partial\Omega_\sigma)$ for all $\sigma\in \mathcal{D}$, $p_\sigma\in C^\infty(\overline{\partial\Omega}_\sigma)$ and $g^{\mathcal{R}}_\sigma\in W^{1-\frac{1}{p},p}(\partial\Omega_\sigma)$ for all $\sigma\in \mathcal{R}$, and that we have the compatibility relations
	\begin{itemize}
		\item[$\mathrm{(i)}$] $g^{\mathcal{D}}_\sigma(0)=g^{\mathcal{D}}_{\sigma'}(0)$ if $(\sigma,\sigma')\in\mathcal{C}_{\mathcal{D}}$,
		\item[$\mathrm{(ii)}$] $(g^{\mathcal{D}}_\sigma)'(0)+(p_{\sigma'}g^{\mathcal{D}}_\sigma)(0)=g^{\mathcal{R}}_{\sigma'}(0)$ if $(\sigma,\sigma')\in\mathcal{C}_{\mathcal{M}}$,
	\end{itemize}
	then the solution $u$ to \eqref{eqn:ModelPb} is in $W^{2,p}(\Omega)$.
    \label{thm:RegMixed}
\end{theorem}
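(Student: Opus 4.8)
The plan is to mimic the strategy used in the proof of Corollary \ref{cor:RegHomogPb}: reduce the non-homogeneous mixed Dirichlet-Robin problem \eqref{eqn:ModelPb} to a homogeneous one by subtracting off a suitable lifting in $W^{2,p}(\Omega)$. First I would rewrite the Robin condition $(\partial_n+p)u=g^{\mathcal{R}}$ on $\partial\Omega_\sigma$ as a Neumann condition $\partial_n u = g^{\mathcal{R}}_\sigma - p_\sigma u_\sigma$, so that \eqref{eqn:ModelPb} becomes a mixed Dirichlet-Neumann problem with prescribed Dirichlet data $g^{\mathcal{D}}_\sigma$ on $\sigma\in\mathcal{D}$ and prescribed Neumann data $g^{\mathcal{R}}_\sigma-p_\sigma u_\sigma$ on $\sigma\in\mathcal{R}$ (the latter still depending on the unknown $u$, but only through its $W^{1,p}$ trace, which is admissible). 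The goal is then to construct a function $w\in W^{2,p}(\Omega)$ whose traces $(w_\sigma,\partial_{n_\sigma}w)$ match this Dirichlet-Neumann data, apply \cite[Theorem 1.5.2.8]{grisvard2011elliptic} (i.e. the compatibility relations \eqref{eqn:CompRel:all}) to guarantee $w$ exists, and finally invoke Corollary \ref{cor:RegHomogPb} on $\tilde u := u - w$, which solves a \emph{homogeneous} Dirichlet-Robin problem with right-hand side $f+\Delta w\in L^p(\Omega)$.

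The construction of $w$ follows the same vertex-by-vertex recipe: for each corner $(\sigma,\sigma')\in\mathcal{C}$ I would build local Dirichlet-Neumann traces $(h^\sigma_{\sigma,\sigma'},z^\sigma_{\sigma,\sigma'})$ and $(h^{\sigma'}_{\sigma,\sigma'},z^{\sigma'}_{\sigma,\sigma'})$ that (a) have the required $W^{2-\frac1p,p}\times W^{1-\frac1p,p}$ regularity, (b) satisfy the local compatibility relations \eqref{eqn:localCRcheck}, and (c) reproduce the correct boundary data, i.e. $h^\sigma_{\sigma,\sigma'}=g^{\mathcal{D}}_\sigma$ if $\sigma\in\mathcal{D}$ and $z^\sigma_{\sigma,\sigma'}=g^{\mathcal{R}}_\sigma-p_\sigma u_\sigma$ if $\sigma\in\mathcal{R}$. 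On a Dirichlet corner $\mathcal{C}_{\mathcal D}$ the Dirichlet traces on both sides are forced to be $g^{\mathcal D}_\sigma$, $g^{\mathcal D}_{\sigma'}$, and hypothesis (i) is exactly the matching condition \eqref{eqn:CompRel:2}; the Neumann traces are then freely chosen (e.g. affine) to satisfy \eqref{eqn:CompRel:3}. On a Robin corner $\mathcal{C}_{\mathcal R}$ the Neumann traces are forced to be $g^{\mathcal R}_\sigma-p_\sigma u_\sigma$, $g^{\mathcal R}_{\sigma'}-p_{\sigma'}u_{\sigma'}$, and the Dirichlet traces are chosen affine with the slope dictated by \eqref{eqn:CompRel:3}; no extra compatibility hypothesis is needed here because $u_\sigma$ is only $W^{1,p}$ and its pointwise value at the vertex is available but not further constrained — one simply picks $h^\sigma_{\sigma,\sigma'}(s)$ linear with $(h^\sigma_{\sigma,\sigma'})'(0)=z^{\sigma'}_{\sigma,\sigma'}(0)=g^{\mathcal R}_{\sigma'}(0)-(p_{\sigma'}u_{\sigma'})(0)$ and matching Dirichlet value $h^\sigma_{\sigma,\sigma'}(0)=h^{\sigma'}_{\sigma,\sigma'}(0)$ at the corner. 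On a mixed corner $\mathcal{C}_{\mathcal M}$ with $\sigma\in\mathcal D$, $\sigma'\in\mathcal R$, the Dirichlet trace on $\sigma$ is $g^{\mathcal D}_\sigma$ and the Neumann trace on $\sigma'$ is $g^{\mathcal R}_{\sigma'}-p_{\sigma'}u_{\sigma'}$; condition \eqref{eqn:CompRel:3} on the $\sigma$ side reads $(g^{\mathcal D}_\sigma)'(0)=z^{\sigma'}_{\sigma,\sigma'}(0)$, and since we want $z^{\sigma'}_{\sigma,\sigma'}(0)$ to equal the imposed value $g^{\mathcal R}_{\sigma'}(0)-(p_{\sigma'}u_{\sigma'})(0)$, and since the trace of $u$ is continuous so that $u_{\sigma'}(0)=u_\sigma(0)=g^{\mathcal D}_\sigma(0)$, this is precisely hypothesis (ii). The remaining traces ($z^\sigma$ on the Dirichlet side, $h^{\sigma'}$ on the Robin side) are then chosen freely/affinely to close \eqref{eqn:localCRcheck}. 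After this, a smooth partition of unity $(\psi^\sigma_{\sigma,\sigma'},\psi^\sigma_{\sigma,\sigma''})$ on each $\partial\Omega_\sigma$ glues the two vertex contributions into global traces $(h_\sigma,z_\sigma)$ as in \eqref{eqn:DefDNtraces}, and smoothness of the partition preserves both the regularity and the compatibility relations \eqref{eqn:CompRel:all}.

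With $w\in W^{2,p}(\Omega)$ in hand, set $\tilde u := u-w$. By construction $\tilde u$ has zero Dirichlet trace on $\mathcal{D}$-sides and, on $\mathcal R$-sides, $\partial_n\tilde u = \partial_n u - z_\sigma = (g^{\mathcal R}_\sigma - p_\sigma u_\sigma) - (g^{\mathcal R}_\sigma - p_\sigma u_\sigma) = 0$ near the vertices, but globally we only get $\partial_n\tilde u = -p_\sigma u_\sigma - z_\sigma + g^{\mathcal R}_\sigma$; rather than fight this, it is cleaner to write the Robin condition directly for $\tilde u$: $(\partial_n+p)\tilde u = g^{\mathcal R}_\sigma - (\partial_n+p)w$ on $\partial\Omega_\sigma$, which is a new Robin datum in $W^{1-\frac1p,p}(\partial\Omega_\sigma)$, and a brief check shows the mixed compatibility relations (i)–(ii) for the pair $(0, g^{\mathcal R}_\sigma-(\partial_n+p)w)$ hold automatically thanks to the corner matching built into $w$. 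Applying Corollary \ref{cor:RegHomogPb} — or, even more directly, the homogeneous-Dirichlet / homogeneous-Robin reduction already contained in its proof — to $\tilde u$, whose right-hand side is $f+\Delta w\in L^p(\Omega)$, yields $\tilde u\in W^{2,p}(\Omega)$, hence $u=\tilde u + w\in W^{2,p}(\Omega)$. The main obstacle is bookkeeping at the mixed corners: one must verify carefully that the only obstruction to constructing the local lifting is the single scalar identity (ii), using the continuity of the $W^{1,p}$-trace of $u$ to identify $u_{\sigma'}(0)$ with $g^{\mathcal D}_\sigma(0)$, and that the affine choices for the "free" traces are genuinely consistent on the whole side (not just at the vertex) — this is where the partition of unity and the fact that each side touches exactly two corners must be used with care.
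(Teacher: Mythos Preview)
Your overall strategy---build a $W^{2,p}$ lifting $w$ by prescribing compatible local Dirichlet/Neumann traces at each corner, glue with a partition of unity, then reduce to a homogeneous problem---is exactly the paper's. The difference lies in \emph{which} constraint you impose on the Robin sides. You require the Neumann trace of $w$ to match that of the unknown $u$, i.e.\ $z_\sigma = g^{\mathcal R}_\sigma - p_\sigma u_\sigma$; the paper instead requires $w$ to satisfy the Robin condition itself, i.e.\ $z_\sigma + p_\sigma h_\sigma = g^{\mathcal R}_\sigma$, so that the lifting is built purely from the data $g^{\mathcal D},g^{\mathcal R},p$ and is independent of $u$. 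Concretely, at a mixed corner $(\sigma,\sigma')\in\mathcal C_{\mathcal M}$ the paper takes $h^{\sigma'}_{\sigma,\sigma'}=g^{\mathcal D}_\sigma$ and $z^{\sigma'}_{\sigma,\sigma'}=g^{\mathcal R}_{\sigma'}-p_{\sigma'}h^{\sigma'}_{\sigma,\sigma'}$, and then condition~(ii) is exactly $(h^\sigma)'(0)=z^{\sigma'}(0)$ without ever invoking $u$. With this choice $\tilde u=u-w$ has \emph{homogeneous Robin} boundary conditions, and Corollary~\ref{cor:RegHomogPb} applies verbatim.

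Your variant can be made to work, but two points deserve care. First, it relies on $u\in W^{1,p}(\Omega)$ so that $u_\sigma\in W^{1-\frac1p,p}$ is continuous and $u_{\sigma'}(0)=g^{\mathcal D}_\sigma(0)$ makes sense; you assert this is ``admissible'' but do not justify it, whereas the paper's $u$-free lifting sidesteps the issue entirely. Second, your last paragraph is more tangled than necessary: since on every Robin side both local Neumann pieces are the \emph{same} function $g^{\mathcal R}_\sigma-p_\sigma u_\sigma$, the partition of unity returns it unchanged, so $z_\sigma=g^{\mathcal R}_\sigma-p_\sigma u_\sigma$ holds globally and hence $\partial_n\tilde u=0$ on all Robin sides---not just near the vertices. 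Thus $\tilde u$ satisfies a \emph{homogeneous Dirichlet--Neumann} problem and Theorem~\ref{thm:RegHomogDNPb} applies directly; there is no need to pass through a non-homogeneous Robin datum and re-check (i)--(ii), which is where your argument drifts toward circularity.
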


\begin{proof}
	The structure of the proof is the same as in the proof of Corollary \ref{cor:RegHomogPb}. First, we will build pairs of functions $(h_\sigma,z_\sigma)$ for all $\sigma\in\mathcal{S}$ satisfying conditions \eqref{eqn:CompRel:all} as well as the boundary conditions prescribed in \eqref{eqn:ModelPb}, that is 
\begin{equation}
	\left\{
    \begin{aligned}
         h_\sigma = g_\sigma^{\mathcal{D}} \: & \mbox{ if } \sigma \in \mathcal{D}, \\
         z_\sigma+p_\sigma h_\sigma = g_\sigma^{\mathcal{R}} \: & \mbox{ if } \sigma \in \mathcal{R}.
    \end{aligned}
    \right.
    \label{eqn:BCcheck}
\end{equation}
This will ensure the existence of an extension $w\in W^{2,p}(\Omega)$ of these boundary conditions. Then, we will be able to conclude using the regularity result for the homogeneous problem from Corollary \ref{cor:RegHomogPb}.

Let $g^{\mathcal{D}}$, $g^{\mathcal{R}}$ and $p$ be such that conditions (i) and (ii) are satisfied. For any $(\sigma,\sigma')\in\mathcal{C}$, we build the local Dirichlet and Neumann traces $(h_{\sigma,\sigma'}^\sigma, z_{\sigma,\sigma'}^\sigma)$ on $\partial\Omega_\sigma$ and $(h_{\sigma,\sigma'}^{\sigma'}, z_{\sigma,\sigma'}^{\sigma'})$ on $\partial\Omega_{\sigma'}$ as follows:
\begin{itemize}
	\item if $(\sigma,\sigma')\in\mathcal{C}_{\mathcal{D}}$, we set
	\begin{equation*}
		h_{\sigma,\sigma'}^\sigma = g_\sigma^{\mathcal{D}}, \quad h_{\sigma,\sigma'}^{\sigma'} = g_{\sigma'}^{\mathcal{D}}, \quad z_{\sigma,\sigma'}^\sigma = (g_{\sigma'}^{\mathcal{D}})', \quad z_{\sigma,\sigma'}^{\sigma'} = (g_\sigma^{\mathcal{D}})';
	\end{equation*}
	\item if $(\sigma,\sigma')\in\mathcal{C}_{\mathcal{R}}$, we set for all $s\in(0,1)$
	\begin{equation*}
	\begin{aligned}
		h_{\sigma,\sigma'}^\sigma &= g_{\sigma'}^{\mathcal{R}}(0)s, \qquad & z_{\sigma,\sigma'}^\sigma &= g_{\sigma}^{\mathcal{R}}-p_\sigma h_{\sigma,\sigma'}^\sigma, \\
		h_{\sigma,\sigma'}^{\sigma'} &= g_{\sigma}^{\mathcal{R}}(0)s, \qquad & z_{\sigma,\sigma'}^{\sigma'} &= g_{\sigma'}^{\mathcal{R}}-p_{\sigma'} h_{\sigma,\sigma'}^{\sigma'};
	\end{aligned}
	\end{equation*}
	\item if $(\sigma,\sigma')\in\mathcal{C}_{\mathcal{M}}$, say $\sigma\in\mathcal{D}$ and $\sigma'\in\mathcal{R}$, we set
	\begin{equation*}
		h_{\sigma,\sigma'}^\sigma = g_\sigma^{\mathcal{D}}, \quad h_{\sigma,\sigma'}^{\sigma'} = g_{\sigma}^{\mathcal{D}}, \quad z_{\sigma,\sigma'}^\sigma = (g_{\sigma}^{\mathcal{D}})', \quad z_{\sigma,\sigma'}^{\sigma'} = g_{\sigma'}^{\mathcal{R}}-p_{\sigma'}h_{\sigma,\sigma'}^{\sigma'}.
	\end{equation*}
\end{itemize}
Using the regularity of $g^{\mathcal{D}}$, $g^{\mathcal{R}}$ and $p$, together with the fact that they satisfy conditions (i) and (ii), we obtain that $(h_{\sigma,\sigma'}^\sigma, z_{\sigma,\sigma'}^\sigma)$ and $(h_{\sigma,\sigma'}^{\sigma'}, z_{\sigma,\sigma'}^{\sigma'})$ verify conditions \eqref{eqn:localregcheck} and \eqref{eqn:localCRcheck}. In addition, we also have
\begin{equation}
	\left\{
	\begin{aligned}
		h_{\sigma,\sigma'}^\sigma &= g_\sigma^{\mathcal{D}} \mbox{ if } \sigma\in\mathcal{D}, \\
		z_{\sigma,\sigma'}^\sigma + p_\sigma h_{\sigma,\sigma'}^\sigma &= g_\sigma^{\mathcal{R}} \mbox{ if } \sigma\in\mathcal{R},
	\end{aligned}
	\right.
	\label{eqn:localBCcheck}
\end{equation}
and the same holds for $(h_{\sigma,\sigma'}^{\sigma'}, z_{\sigma,\sigma'}^{\sigma'})$. Now, on each side $\partial\Omega_\sigma$, we use a partion of unity as in the proof of Corollary \ref{cor:RegHomogPb} and define $(h_\sigma,z_\sigma)$ as in \eqref{eqn:DefDNtraces}. This guarantees that $(h_\sigma,z_\sigma)$ satisfies conditions \eqref{eqn:CompRel:all}, and we deduce existence of a unique $w\in W^{2,p}(\Omega)$ such that $w_\sigma=h_\sigma$ and $\partial_{n_\sigma}w=z_\sigma$ on each $\partial\Omega_\sigma$. In addition, it follows from \eqref{eqn:localBCcheck} that $(h_\sigma,z_\sigma)$ also satisfies \eqref{eqn:BCcheck}, which means that $w$ verifies the boundary conditions of our model problem \eqref{eqn:ModelPb}.

Finally, we introduce $\tilde{u}:=u-w$ where $u$ is the solution to \eqref{eqn:ModelPb} and $w$ is as defined above. A direct computation shows that $\tilde{u}$ solves our model problem with homogeneous boundary conditions, namely
\begin{equation*}
    \left\{
    \begin{aligned}
        -\Delta \tilde{u} &= f+\Delta w \: \mbox{ in } \Omega , \\
        \mathcal{B}\tilde{u} &= 0\: \mbox{ on } \partial\Omega .
    \end{aligned}
    \right.
\end{equation*}
Since $f+\Delta w \in L^p(\Omega)$, existence and uniqueness of $\tilde{u}$ in $W^{2,p}(\Omega)$ follow from Corollary \ref{cor:RegHomogPb}, and we conclude that $u=\tilde{u}+w \in W^{2,p}(\Omega)$.
\end{proof}

\begin{remark}
	Theorem \ref{thm:RegMixed} is a generalization of the result obtained in \cite{grisvard2011elliptic} in the mixed Dirichlet-Neumann non-homogeneous case. Indeed, if we take $p\equiv 0$ on $\partial\Omega$, we recover from (ii) the condition $(g^{\mathcal{D}}_\sigma)'(0)=g^{\mathcal{N}}_{\sigma'}(0)$ at any mixed corner $(\sigma,\sigma')$.
\end{remark}

From now on, we will assume that the data satisfy the regularity required by the assumptions in Theorem \ref{thm:RegDir}, Theorem \ref{thm:RegRob} or Theorem \ref{thm:RegMixed} depending on the definition of the boundary condition operator $\mathcal{B}$. This will guarantee that the solutions we deal with are at least in $W^{2,p}(\Omega)$. In particular, let us recall that since $p>2$ and $\Omega$ is bounded, we have $W^{2,p}(\Omega) \subset C^0(\overline{\Omega})$ and $W^{2-\frac1p,p}(\partial\Omega)\subset W^{1-\frac1p,p}(\partial\Omega)\subset C^0(\partial\Omega)$.

\subsection{Even/odd symmetric decomposition}

We briefly recall the definitions of even/odd symmetric functions in $L^p$, for $p\in [1,+\infty]$.
\begin{defn}[Symmetric set]
    Let $n\geq 1$ be an integer. A subset $U$ of $\mathbb{R}^n$ is
    said to be \textit{symmetric} if, for any $(x_1, \cdots,x_n)\in
    \mathbb{R}^n$,
    $$
      (x_1, \cdots,x_n)\in U \: \Longrightarrow \:
      (-x_1, \cdots,-x_n)\in U.
    $$
\end{defn}
\begin{defn}[Even/odd symmetric functions]
    Let $U\subset \mathbb{R}^n$ be open and symmetric, and
    $p\in[1,+\infty]$. A function $h\in L^p(U)$ is called \textit{even
      symmetric} if for almost all $(x_1, \cdots,x_n)\in U$,
    $$
        h(-x_1, \cdots,-x_n) = h(x_1, \cdots,x_n). 
    $$
    Similarly, the function $h$ is called \textit{odd symmetric} if
    for almost all $(x_1, \cdots,x_n)\in U$,
    $$
        h(-x_1, \cdots,-x_n) = -h(x_1, \cdots,x_n). 
    $$
\end{defn}
Let us also recall the even/odd symmetric decomposition property for any function in $L^p$.
\begin{theorem}[Even/odd decomposition]\label{thm:DecEvenOdd}
  Let $U\subset \mathbb{R}^n$ be open and symmetric, and
  $p\in[1,+\infty]$. Every function $h\in L^p(U)$ can be uniquely
  decomposed as the sum of an even and an odd symmetric function, both in
  $L^p(U)$, which are called the \textit{even symmetric part} and the
  \textit{odd symmetric part} of the function. These functions, denoted by $h_e$
  and $h_o$, are for almost all $(x_1, \cdots,x_n)\in U$ given by
  \begin{eqnarray*}
        h_e(x_1, \cdots,x_n) &:=& \frac{1}{2}\left( h(x_1, \cdots,x_n) + h(-x_1, \cdots,-x_n) \right), \\
        h_o(x_1, \cdots,x_n) &:=& \frac{1}{2}\left( h(x_1, \cdots,x_n) - h(-x_1, \cdots,-x_n) \right).
  \end{eqnarray*}
\end{theorem}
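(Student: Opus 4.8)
The plan is to realize the decomposition through the reflection map and exploit that it is a linear isometric involution of $L^p(U)$. Concretely, I would first introduce $R:\mathbb{R}^n\to\mathbb{R}^n$, $R(x_1,\dots,x_n):=(-x_1,\dots,-x_n)$. Because $U$ is symmetric, $R$ restricts to a bijection of $U$ onto itself with $R\circ R=\mathrm{id}$; since $R$ is linear with $|\det R|=1$, it preserves Lebesgue measure, so the map $h\mapsto h\circ R$ is a well-defined linear isometry of $L^p(U)$ onto itself. In particular it acts on $L^p$-equivalence classes, so the ``almost all'' quantifiers in the definitions are stable under it, and a function $h$ is even symmetric precisely when $h\circ R=h$ in $L^p(U)$, odd symmetric precisely when $h\circ R=-h$.

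For existence, I would simply set $h_e:=\tfrac12(h+h\circ R)$ and $h_o:=\tfrac12(h-h\circ R)$. Both belong to $L^p(U)$ because it is a vector space containing $h$ and $h\circ R$, and the claimed pointwise formulas are read off directly from the definition of $R$. A one-line computation using $R\circ R=\mathrm{id}$ gives $h_e\circ R=h_e$ and $h_o\circ R=-h_o$, so $h_e$ is even symmetric, $h_o$ is odd symmetric, and plainly $h_e+h_o=h$.

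For uniqueness, suppose $h=a+b$ with $a$ even and $b$ odd in $L^p(U)$. Composing with $R$ gives $h\circ R=a-b$; adding and subtracting these two identities forces $a=\tfrac12(h+h\circ R)=h_e$ and $b=\tfrac12(h-h\circ R)=h_o$. Equivalently, the difference of two admissible decompositions is a function that is simultaneously even and odd, hence equal to its own negative almost everywhere, hence zero in $L^p(U)$.

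The argument is essentially bookkeeping in the vector space $L^p(U)$; the only point requiring a little care — and the one place where the hypotheses genuinely enter — is the justification that composition with $R$ descends to $L^p$-equivalence classes and is an isometry there, which uses the symmetry of $U$ (so that $h\circ R$ is again defined on $U$) and the measure-invariance of $R$ (so that $\|h\circ R\|_{L^p(U)}=\|h\|_{L^p(U)}$, including the case $p=+\infty$). Everything else is immediate linear algebra.
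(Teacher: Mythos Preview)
Your proof is correct and complete: the reflection map $R$ is a measure-preserving involution of $U$, composition with it is a linear isometry on $L^p(U)$, and the existence and uniqueness of the decomposition follow by the standard projection argument you give. The paper does not actually prove this theorem; it merely states it and refers the reader to \cite{chaudet2022cross1} for details, so there is no in-paper argument to compare against.
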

The reader is referred to \cite{chaudet2022cross1} for further details about these notions. 

We can now use this property to decompose problem \eqref{eqn:ModelPb} into its
\textit{even symmetric part} and its \textit{odd symmetric part}. This leads to: find $u^e$ and $u^o$
solutions to
\begin{subequations}\label{subeqn:EvenOddModelPb}
	\begin{align}
		\left\{
    		\begin{aligned}
        		-\Delta u^e &= f_e \: \mbox{ in } \Omega , \\
        		\mathcal{B}u^e &= g_e\: \mbox{ on } \partial\Omega ,
    		\end{aligned}
    		\right.
    		\label{subeqn:EvenModelPb} \\
    		\left\{
    		\begin{aligned}
        		-\Delta u^o &= f_o \: \mbox{ in } \Omega , \\
        		\mathcal{B}u^o &= g_o\: \mbox{ on } \partial\Omega .
    		\end{aligned}
    		\right.
    		\label{subeqn:OddModelPb}
	\end{align}
\end{subequations}
Let $u$ denote the solution to \eqref{eqn:ModelPb}. In the case of the full Dirichlet problem ($\mathcal{D}=\mathcal{S}$), and in the case of the full Robin problem ($\mathcal{R}=\mathcal{S}$) with $p$ even symmetric, it is known from \cite{chaudet2022cross1} that the unique solutions $u^e$ and $u^o$ to these subproblems are precisely the even symmetric part $u_e$ and the odd symmetric part $u_o$ of $u$. 
However, for general boundary conditions $\mathcal{B}u=g$, this identification might not hold. Therefore, in order to perform such a decomposition, we need some additional assumption on the boundary conditions. Let us begin with a definition.

\begin{defn}
	The boundary conditions $\mathcal{B}u=g$ are said to be \textit{symmetric} on $\partial\Omega$ if they satisfy one of the following conditions:
	\begin{itemize}
		\item[$\mathrm{(i)}$] $\mathcal{R}=\emptyset$;
		\item[$\mathrm{(ii)}$] $\mathcal{D}=\emptyset$ and $p$ is even symmetric;
		\item[$\mathrm{(iii)}$] $\{l,r\}\subset \mathcal{R}$ or $\mathcal{D}$, $\{b,t\}\subset \mathcal{R}$ or $\mathcal{D}$, and $p$ is even symmetric.
	\end{itemize}
	\label{dfn:SymBC}
\end{defn}

The last condition (iii) means that boundary conditions on opposite sides of the square $\Omega$ must be of similar nature (either Dirichlet or Robin). This new symmetry assumption enables us to identify the solutions to \eqref{subeqn:EvenModelPb} and \eqref{subeqn:OddModelPb} with the even and odd symmetric parts of $u$.

\begin{lemma}
	If the boundary conditions in \eqref{eqn:ModelPb} are symmetric, then the unique solutions to \eqref{subeqn:EvenModelPb} and \eqref{subeqn:OddModelPb} are given by: $u^e=u_e$ and $u^o=u_o$.
	\label{lem:EvenOddDecSol}
\end{lemma}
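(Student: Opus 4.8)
The plan is to exploit uniqueness of solutions to the three problems \eqref{eqn:ModelPb}, \eqref{subeqn:EvenModelPb}, \eqref{subeqn:OddModelPb}, together with the fact that the Laplacian and the symmetry operator $S:h\mapsto h(-\,\cdot\,)$ commute. First I would observe that, since the data $f$, $g$ admit unique even/odd decompositions $f=f_e+f_o$, $g=g_e+g_o$ by Theorem \ref{thm:DecEvenOdd}, and since the boundary conditions are symmetric in the sense of Definition \ref{dfn:SymBC}, both subproblems \eqref{subeqn:EvenModelPb} and \eqref{subeqn:OddModelPb} are well-posed (they are instances of \eqref{eqn:ModelPb} with data satisfying Assumption 1, because the even/odd parts of an $H^{-1}$, $H^{1/2}$ or $H^{-1/2}$ function remain in the same space, and the strict positivity condition on $p$ in the case $\mathcal{D}=\emptyset$ is preserved). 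Hence $u^e$ and $u^o$ exist and are unique, and by linearity $u^e+u^o$ solves \eqref{eqn:ModelPb}; by uniqueness for \eqref{eqn:ModelPb} we get $u=u^e+u^o$.

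Next I would show that $u^e$ is even symmetric and $u^o$ is odd symmetric; combined with the uniqueness of the even/odd decomposition of $u$ (Theorem \ref{thm:DecEvenOdd}), this forces $u^e=u_e$ and $u^o=u_o$ and finishes the proof. To see that $u^e$ is even, set $w:=S u^e$, i.e.\ $w(x)=u^e(-x)$. Since $\Omega$ is symmetric, $w\in H^1(\Omega)$, and because $-\Delta$ commutes with $S$ we have $-\Delta w = S(-\Delta u^e)=S f_e = f_e$ (as $f_e$ is even). The key point is that the boundary operator also commutes with $S$ under the symmetry assumption: the map $x\mapsto -x$ sends $\partial\Omega_l\leftrightarrow\partial\Omega_r$ and $\partial\Omega_b\leftrightarrow\partial\Omega_t$, it sends the outward normal $n_\sigma$ to $-n_{\sigma'}$ where $\sigma'$ is the opposite side, so $\partial_{n_{\sigma'}}w = \partial_{n_\sigma}u^e\circ(-\,\cdot\,)$; moreover condition (iii) of Definition \ref{dfn:SymBC} guarantees that $\sigma$ and its opposite side $\sigma'$ carry the same type of condition, and $p$ being even symmetric gives $p_{\sigma'}=p_\sigma\circ(-\,\cdot\,)$. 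Putting these together, $\mathcal{B}w = S(\mathcal{B}u^e) = S g_e = g_e$. Thus $w$ solves \eqref{subeqn:EvenModelPb}, and by uniqueness $w=u^e$, i.e.\ $u^e$ is even symmetric. The same argument applied to $S u^o$ yields $-\Delta(Su^o)=S f_o = -f_o$ and $\mathcal{B}(Su^o)=S g_o = -g_o$, so $-(Su^o)$ solves \eqref{subeqn:OddModelPb} and hence $Su^o=-u^o$, i.e.\ $u^o$ is odd symmetric.

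Finally, $u=u^e+u^o$ with $u^e$ even and $u^o$ odd is an even/odd decomposition of $u$; by the uniqueness statement in Theorem \ref{thm:DecEvenOdd} we conclude $u^e=u_e$ and $u^o=u_o$.

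The main obstacle, and the step deserving the most care in the write-up, is the verification that $\mathcal{B}$ commutes with the symmetry $S$ — i.e.\ the careful bookkeeping of how $x\mapsto -x$ permutes the sides $\partial\Omega_\sigma$, flips the outward normals, and interacts with the coefficient $p$ — and checking that each of the three cases (i), (ii), (iii) of Definition \ref{dfn:SymBC} is exactly what makes this work (in particular that mixed configurations with, say, Dirichlet on $\partial\Omega_l$ and Robin on $\partial\Omega_r$ are genuinely excluded, which is why condition (iii) is needed). A secondary, more routine point to record is that the even/odd parts of the data inherit Assumption 1, so that the subproblems are legitimately well-posed instances of \eqref{eqn:ModelPb}.
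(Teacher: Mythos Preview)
Your proposal is correct and follows essentially the same approach as the paper: both arguments show that $u^e$ is even and $u^o$ is odd by checking that the reflected function $u^e(-\cdot)$ (resp.\ $-u^o(-\cdot)$) solves the same subproblem and invoking uniqueness, then use $u=u^e+u^o$ together with the uniqueness of the even/odd decomposition from Theorem~\ref{thm:DecEvenOdd}. The paper phrases the symmetry step as ``$u^e-\tilde u^e$ solves the homogeneous problem, hence vanishes,'' which is exactly your $Su^e=u^e$ rewritten, and it singles out case~(iii) only because cases~(i)--(ii) were handled in the earlier paper~\cite{chaudet2022cross1}.
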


\begin{proof}
	The cases of full Dirichlet or Robin problems (conditions (i) or (ii)) have already been treated in \cite{chaudet2022cross1}. Therefore we will focus on the case of mixed boundary conditions (iii). Without loss of generality, let us assume that $\{l,r\}\subset \mathcal{R}$ and $\{b,t\}\subset \mathcal{D}$. In this specific case, problems \eqref{subeqn:EvenOddModelPb} become	
\begin{equation*}
	\left\{
    	\begin{aligned}
    		-\Delta u^e &= f_e \: \mbox{ in } \Omega , \\
    		u^e &= g_{De}\: \mbox{ on } \partial\Omega_b\cup\partial\Omega_t , \\
    		(\partial_n+p)u^e &= g_{Re}\: \mbox{ on } \partial\Omega_l\cup\partial\Omega_r ,
    	\end{aligned}
    	\right.
\end{equation*}
\begin{equation*}
    	\left\{
    	\begin{aligned}
    		-\Delta u^o &= f_o \: \mbox{ in } \Omega , \\
    		u^o &= g_{Do}\: \mbox{ on } \partial\Omega_b\cup\partial\Omega_t , \\
    		(\partial_n+p)u^o &= g_{Ro}\: \mbox{ on } \partial\Omega_l\cup\partial\Omega_r .
    	\end{aligned}
    	\right.  		
\end{equation*}
Since the boundary conditions on opposite sides of $\partial\Omega$ are of similar nature, we are able to define the even/odd symmetric parts of $g_D$ and $g_R$. For example, we have on the bottom side $\partial\Omega_b$
\begin{equation*}
	g_{De}(x,-1) := \frac12 \left( g_D(x,-1)+g_D(-x,1) \right), \: \mbox{ for all } x\in (-1,1),
\end{equation*}
which makes sense because $g_D$ is defined on $\partial\Omega_b\cup\partial\Omega_t$. In addition, we also have in this case that $u^e$ is even symmetric and $u^o$ is odd symmetric. Indeed, introducing $\tilde{u}^e$ and $\tilde{u}^o$ such that for all $(x,y)\in\Omega$, $\tilde{u}^e(x,y):=u^e(-x,-y)$ and $\tilde{u}^o(x,y):=u^o(-x,-y)$, we get that $u^e-\tilde{u}^e$ and $u^o+\tilde{u}^o$ solve the homogeneous mixed problem, whose unique solution is 0. Finally, adding the two previous formulations shows that $u^e+u^o$ solves \eqref{eqn:ModelPb}. Therefore the uniqueness of $u$ together with the uniqueness of the even/odd decomposition in Theorem \ref{thm:DecEvenOdd} yield $u^e=u_e$ and $u^o=u_o$.
\end{proof}

In the rest of this paper, it will be assumed that the boundary conditions in \eqref{eqn:ModelPb} are symmetric.

\section{A new variant of the Dirichlet-Neumann method}
\label{sec:NewDN}

The analysis of the (standard) Dirichlet-Neumann method performed in the first part of this work
\cite[Section 3]{chaudet2022cross1} reveals that the method is extremely efficient when applied to the even symmetric part of \eqref{eqn:ModelPb}, while it completely fails when applied to its odd symmetric part. Here, the idea is to take advantage of the nice behaviour of the method for the even symmetric part of the problem, and designing another method of Dirichlet-Neumann type specifically tuned to treat the odd symmetric part.

\subsection{Presentation of the method}

As in \cite[Section 1.4]{quarteroni1999domain}, we introduce a gray and white
coloring, see Figure \ref{sub:SchDDstd}, and define the sets of indices
$\mathcal{I}_G:=\{1\leq i\leq 4 \: : \: \Omega_i \mbox{ is gray }\} =
\{2,4\}$ and $\mathcal{I}_W:=\mathcal{I}\setminus \mathcal{I}_G =
\{1,3\}$. The transmission conditions of the standard Dirichlet-Neumann method are recalled in Figure \ref{sub:SchDDstd}.
Similarly to the approach in \cite{chaudet2023cross}, we propose a new distribution of Dirichlet and Neumann transmission conditions for the odd symmetric part, as shown in Figure \ref{sub:SchDDnew}. Let us introduce $\Gamma_D$ and $\Gamma_N$ the sets containing all parts of the interface $\Gamma$ where transmission conditions of Dirichlet or Neumann type are imposed, that is:
\begin{equation*}
\begin{aligned}
	\Gamma_D & := \{ \Gamma_{ij} \mid (i,j)\in\mathcal{I}^2 \mbox{ and } \Gamma_{ij} \mbox{ is of Dirichlet type} \} \\
	\:       &  = \{ \Gamma_{14}, \Gamma_{21}, \Gamma_{32}, \Gamma_{43} \}\:, \\
	\Gamma_N & := \{ \Gamma_{ij} \mid (i,j)\in\mathcal{I}^2 \mbox{ and } \Gamma_{ij} \mbox{ is of Neumann type} \} \\
	\:       &  = \{ \Gamma_{12}, \Gamma_{23}, \Gamma_{34}, \Gamma_{41} \}\:.
\end{aligned}
\end{equation*}
Given an initial guess $u^0=u_e^0+u_o^0$ and a relaxation parameter
$\theta\in \mathbb{R}$, each iteration $k\geq 1$ of our new Dirichlet-Neumann method applied to \eqref{eqn:ModelPb} can be split into two steps. In each step, we solve separately and use different transmission conditions for the even and odd symmetric parts, which leads to the following algorithm.

\begin{itemize}
    \item  \textbf{(First step)} For the even symmetric part, solve for all $i\in \mathcal{I}_W$
    \begin{equation*}
        \left\{  \begin{aligned}
            -\Delta u_{e,i}^k  &= f_e \: \mbox{ in } \Omega_i \:, \\
            \mathcal{B}u_{e,i}^k &= g_e \: \mbox{ on } \partial\Omega_i^0 \:, \\
            u_{e,i}^k &= \theta u_{e,j}^{k-1} + (1-\theta)u_{e,i}^{k-1}\: \mbox{ on } \Gamma_{ij}, \: \forall j\in \mathcal{I}_G \: \mbox{ s.t. } \Gamma_{ij}\neq\emptyset \:.
        \end{aligned}
        \right.
    \end{equation*}
    For the odd symmetric part, solve for all $i\in \mathcal{I}_W$
    \begin{equation*}
        \left\{  \begin{aligned}
            -\Delta u_{o,i}^k  &= f_o \: \mbox{ in } \Omega_i \:, \\
            \mathcal{B}u_{o,i}^k &= g_o \: \mbox{ on } \partial\Omega_i^0 \:, \\
            u_{o,i}^k &= \theta u_{o,j}^{k-1} + (1-\theta)u_{o,i}^{k-1}\: \mbox{ on } \Gamma_{ij}, \: \forall j\in \mathcal{I} \: \mbox{ s.t. } \Gamma_{ij}\in\Gamma_D \:, \\
            \partial_{n_i}u_{o,i}^k &= -\theta\partial_{n_j}u_{o,j}^{k-1} + (1-\theta)\partial_{n_i}u_{o,i}^{k-1} \: \mbox{ on } \Gamma_{ij}, \: \forall j\in \mathcal{I} \: \mbox{ s.t. } \Gamma_{ij}\in\Gamma_N \:.
        \end{aligned}
        \right.
    \end{equation*}
        \item  \textbf{(Second step)} For the even symmetric part, solve for all $i\in \mathcal{I}_G$
    \begin{equation*}
        \left\{  \begin{aligned}
            -\Delta u_{e,i}^k  &= f_e \: \mbox{ in } \Omega_i \:, \\
            \mathcal{B}u_{e,i}^k &= g_e \: \mbox{ on } \partial\Omega_i^0 \:, \\
            \partial_{n_i}u_{e,i}^k &= -\partial_{n_j}u_{e,j}^{k} \: \mbox{ on } \Gamma_{ij}, \: \forall j\in \mathcal{I}_G \: \mbox{ s.t. } \Gamma_{ij}\neq\emptyset \:.
        \end{aligned}
        \right.
    \end{equation*}
    For the odd symmetric part, solve for all $i\in \mathcal{I}_G$
    \begin{equation*}
        \left\{  \begin{aligned}
            -\Delta u_{o,i}^k  &= f_o \: \mbox{ in } \Omega_i \:, \\
            \mathcal{B}u_{o,i}^k &= g_o \: \mbox{ on } \partial\Omega_i^0 \:, \\
            u_{o,i}^k &= u_{o,j}^{k} \: \mbox{ on } \Gamma_{ij}, \: \forall j\in \mathcal{I} \: \mbox{ s.t. } \Gamma_{ij}\in\Gamma_D \:, \\
            \partial_{n_i}u_{o,i}^k &= -\partial_{n_j}u_{o,j}^{k}\: \mbox{ on } \Gamma_{ij}, \: \forall j\in \mathcal{I} \: \mbox{ s.t. } \Gamma_{ij}\in\Gamma_N \:.
        \end{aligned}
        \right.
    \end{equation*}
\end{itemize}
As for the standard Dirichlet-Neumann method, we have to choose an initial guess $u^0$. For the method to be well defined, this initial guess is required to satisfy the following compatibility condition.
\begin{defn}[Compatible initial guess]
  An initial guess $u^0$ is said to be
  compatible with the boundary conditions if it satisfies: $u^0\in W^{2,p}(\Omega)$, 
  $u^0\vert_{\partial\Omega_\sigma\cap\Gamma} = g^{\mathcal{D}}_\sigma\vert_\Gamma$ and $\partial_\tau u^0\vert_{\partial\Omega_\sigma\cap\Gamma} = (g^{\mathcal{D}}_{\sigma})' \vert_{\Gamma}$ for all $\sigma\in\mathcal{D}$, and $(\partial_n+p)u^0\vert_{\partial\Omega_\sigma\cap\Gamma} = g^{\mathcal{R}}_{\sigma}\vert_\Gamma$ for all $\sigma\in\mathcal{R}$, where $\tau$ denotes the tangential vector to the boundary $\partial\Omega$.
  \label{def:CompatibleIG}
\end{defn}
This definition means that, in each subdomain, $u^0$ and the boundary data must satisfy the regularity assumptions and the $W^{2,p}$ compatibility relations (i)-(ii) in Theorem \ref{thm:RegMixed}.

\subsection{Well-posedness}

One issue when using the standard Dirichlet-Neumann method in configurations involving cross-points is well-posedness. More specifically, it can be proved that the method is in general not well-posed as the solutions in the subdomains $\Omega_i$, $i\in\mathcal{I}$, might not be unique and belong to $L^2(\Omega_i)\setminus H^1(\Omega_i)$, see \cite{chaudet2022cross1}. The new Dirichlet-Neumann method proposed here does not suffer from this non desirable behaviour. Instead, when the initial guess is regular enough, we are able to prove that all the local solutions computed along the iterations remain in $W^{2,p}(\Omega_i)$, for each $i\in\mathcal{I}$.

\begin{theorem}
	Let $u^0$ be an initial guess compatible with the boundary conditions. Then the new Dirichlet-Neumann method is well-posed. In addition, for all $k\geq 1$ and for each $i\in\mathcal{I}$, the local solution $u_i^k$ belongs to $W^{2,p}(\Omega_i)$.
	\label{thm:WellPosed}
\end{theorem}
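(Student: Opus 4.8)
The plan is to prove the theorem by induction on the iteration index $k\geq 1$, treating the even symmetric part and the odd symmetric part separately, since the algorithm decouples them. At each step, the local subproblems are of the three types covered by Theorems \ref{thm:RegDir}, \ref{thm:RegRob} and \ref{thm:RegMixed}, so the whole argument reduces to checking that the boundary data fed into each subproblem are regular enough and satisfy the relevant $W^{2,p}$ compatibility relations at every corner of the subdomain $\Omega_i$. The corners of each $\Omega_i$ are of two kinds: those lying on $\partial\Omega$ (where the data come from $g^{\mathcal{D}}$, $g^{\mathcal{R}}$, $p$ and possibly from a transmission condition on an adjacent edge of $\Omega_i$), and the interior cross-point, where two transmission edges of $\Omega_i$ meet. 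The key observation that makes everything work is that the new distribution in Figure \ref{sub:SchDDnew} assigns, at the cross-point, one Dirichlet edge and one Neumann edge to each subdomain in both steps, so that locally each subproblem is always a genuine mixed Dirichlet-Neumann (or Dirichlet-Robin) problem near the cross-point, and never a pure-Dirichlet or ill-posed floating configuration; this is precisely what fails for the standard method.

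For the base case $k=1$ and for the \textbf{first step}, fix $i\in\mathcal{I}_W$. The even part solves a problem with $\mathcal{B}$-type data on $\partial\Omega_i^0$ and Dirichlet data $\theta u_{e,j}^{0}+(1-\theta)u_{e,i}^{0}$ on each transmission edge $\Gamma_{ij}$. Since $u^0$ is a compatible initial guess, $u_e^0=\tfrac12(u^0+\tilde u^0)\in W^{2,p}(\Omega)$ and its traces on $\Gamma$ inherit the required regularity and compatibility; the Dirichlet transmission data are thus in $W^{2-\frac1p,p}$ on each edge, they match the prescribed boundary data at the boundary corners of $\Omega_i$ by Definition \ref{def:CompatibleIG}, and they match each other at the cross-point because both $u_{e,j}^0$ and $u_{e,i}^0$ are restrictions of the single continuous function $u_e^0$. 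At the cross-point, the two transmission edges of $\Omega_i$ are one of Dirichlet type and one of Neumann type (read off Figure \ref{sub:SchDDnew}), so the relevant compatibility relation there is the mixed one $(h_\sigma)'(0)=z_{\sigma'}(0)$; the Dirichlet edge carries $h_\sigma=$ (the tangential derivative of $u_e^0$ along that edge), and the Neumann edge carries the normal derivative data, and both equal the corresponding derivative of $u_e^0$ at the cross-point, so the relation holds. Hence Theorem \ref{thm:RegDir}/\ref{thm:RegRob}/\ref{thm:RegMixed} applied on $\Omega_i$ gives $u_{e,i}^1\in W^{2,p}(\Omega_i)$. The odd part in the first step is analogous, with the caveat that some transmission edges now carry Neumann data $-\theta\partial_{n_j}u_{o,j}^{0}+(1-\theta)\partial_{n_i}u_{o,i}^{0}$, which lies in $W^{1-\frac1p,p}$ by the $W^{2,p}$-regularity of $u_o^0$; again the mixed compatibility relation at the cross-point holds because the Dirichlet-edge tangential data and the Neumann-edge data are both derivatives of the single function $u_o^0$ there, and the sign conventions ($-\partial_{n_j}$ matching $+\partial_{n_i}$ across $\Gamma_{ij}$) are exactly those needed for two outward normals at a shared edge.

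For the \textbf{second step} with $i\in\mathcal{I}_G$, the data on the transmission edges come from the step-one solutions $u_{e,j}^k$, $u_{o,j}^k$ with $j\in\mathcal{I}_W$, which we have just shown to be in $W^{2,p}(\Omega_j)$, hence their traces and normal derivatives on $\Gamma$ have the required regularity. The corner matching at the \emph{boundary} corners of $\Omega_i$ uses the boundary data $g^{\mathcal{D}},g^{\mathcal{R}}$ and the fact (from step one) that $u_{e,j}^k,u_{o,j}^k$ already satisfy the boundary conditions with the right compatibility, while the matching at the \emph{cross-point} again uses that in this step each $\Omega_i$, $i\in\mathcal{I}_G$, also sees one Dirichlet and one Neumann transmission edge, and that the step-one solutions on the two neighbouring white subdomains glue continuously/derivative-compatibly at the cross-point — this is where one must be slightly careful, because unlike $u_e^0$ the functions $u_{e,1}^k$ and $u_{e,3}^k$ are a priori different functions on their respective domains, so one needs to check that the Dirichlet trace prescribed on one edge of $\Omega_2$ and the Neumann datum prescribed on the adjacent edge of $\Omega_2$ are compatible at the cross-point. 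This follows because along the first edge the Dirichlet datum equals $u_{o,j}^k$ whose tangential derivative at the cross-point is controlled by step one, and along the second edge the Neumann datum equals $-\partial_{n_{j'}}u_{o,j'}^k$; one shows these two scalars coincide by invoking the compatibility relation already satisfied by the respective step-one solution at the cross-point. The induction step $k\to k+1$ is identical, using that $u_i^k\in W^{2,p}(\Omega_i)$ for all $i$ (so in particular $u_e^k,u_o^k$, assembled piecewise, provide valid transmission data) in place of the compatibility of $u^0$. The main obstacle is bookkeeping the corner compatibility at the interior cross-point: one must verify, for every subdomain and in both steps, that the Dirichlet-edge data and the Neumann-edge data produce equal values of the relevant scalar $(h_\sigma)'(0)=z_{\sigma'}(0)$ there, and that the sign conventions of the two outward normals are handled correctly; once the right distribution of Figure \ref{sub:SchDDnew} is in place this is a routine but careful check, and it is precisely the step that has no analogue (indeed, fails) for the standard method.
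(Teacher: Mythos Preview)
Your inductive plan and the checklist of corners are right in spirit, but there are two concrete problems.

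First, a confusion about the even part. For the even symmetric part the algorithm uses the \emph{standard} distribution of Figure~\ref{sub:SchDDstd}, not Figure~\ref{sub:SchDDnew}: in the first step both transmission edges of $\Omega_i$ ($i\in\mathcal{I}_W$) carry Dirichlet data, and in the second step both transmission edges of $\Omega_i$ ($i\in\mathcal{I}_G$) carry Neumann data. So your sentence ``At the cross-point, the two transmission edges of $\Omega_i$ are one of Dirichlet type and one of Neumann type'' is wrong for the even part. This is not fatal by itself: for $\Omega_1$ the cross-point is then a Dirichlet corner and condition~(i) reduces to continuity of the Dirichlet data (which you did note), while for $\Omega_2$ it is a pure Neumann (Robin) corner and no compatibility relation is required. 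But you should state this correctly.

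Second, and this is a genuine gap, your handling of the cross-point from the second step onward is incomplete. You claim the mixed compatibility there ``follows\ldots\ by invoking the compatibility relation already satisfied by the respective step-one solution''. But for the odd part in $\Omega_2$ at iteration $k=1$, the Dirichlet datum on $\Gamma_{21}$ is $u_{o,1}^1$ while the Neumann datum on $\Gamma_{23}$ comes from $u_{o,3}^1$; these are solutions of two \emph{independent} subproblems, and the individual $W^{2,p}$-compatibility of each one tells you nothing relating $\partial_y u_{o,1}^1(0,0)$ to $\partial_y u_{o,3}^1(0,0)$. What actually closes the argument is the odd-symmetry relation $u_{o,3}^1(x,y)=-u_{o,1}^1(-x,-y)$, which holds because the data $f_o,g_o,u_o^0$ are odd symmetric and the subproblems in $\Omega_1$ and $\Omega_3$ are images of one another under $(x,y)\mapsto(-x,-y)$. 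The same mechanism (now via $u_{e,4}^1(x,y)=u_{e,2}^1(-x,-y)$, respectively $u_{o,4}^1(x,y)=-u_{o,2}^1(-x,-y)$) is what makes the Dirichlet data agree at the cross-point in the first step of iteration $k=2$; without it there is no reason that $u_{e,4}^1(0,0)=u_{e,2}^1(0,0)$ or that the pair $(u_{o,4}^1,-\partial_{n_2}u_{o,2}^1)$ satisfies~(ii). You need to establish these symmetry relations and propagate them through the induction; they are the missing idea, not just bookkeeping.
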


\begin{proof}
	We will begin with studying the first two iterations, then we will conclude by induction.
	
\noindent \textbf{Iteration $\boldsymbol{k=1}$, first step:} In
$\Omega_1$, we solve for the even and odd symmetric parts
\begin{equation*}
	\left\{\begin{aligned}
    		-\Delta u_{e,1}^1 &= f_e \: \mbox{ in } \Omega_1 \:, \\
    		\mathcal{B} u_{e,1}^1 &= g_e \: \mbox{ on } \partial\Omega_1^0 \:, \\
    		u_{e,1}^1 &= u^0_e  \: \mbox{ on } \Gamma_{14}\cup\Gamma_{12} \:,
  	\end{aligned}\right.
  	\quad \mbox{ and } \quad
  	\left\{\begin{aligned}
    		-\Delta u_{o,1}^1 &= f_o \: \mbox{ in } \Omega_1 \:, \\
   		 \mathcal{B} u_{o,1}^1 &= g_o \: \mbox{ on } \partial\Omega_1^0 \:, \\
   		 u_{o,1}^1 &= u^0_o  \: \mbox{ on } \Gamma_{14} \:, \\
    		\partial_{n_1} u_{o,1}^1 &= \partial_{n_1}u^0_o \: \mbox{ on } \Gamma_{12} \:.
  	\end{aligned}\right.
\end{equation*}
Since we are dealing with Dirichlet-Robin problems for both parts, we need to check that the functions in the boundary conditions are regular enough and that the compatibility relations (i)-(ii) in Theorem \ref{thm:RegMixed} are satisfied on $\partial\Omega_1$. In the two cases, the regularity condition follows directly from the regularity assumptions on the data and on $u^0$. As for conditions (i) and (ii), they hold at the bottom-left and top-right corners. Indeed, on the one hand, the boundary data related to the whole domain are already assumed to satisfy the compatibility relations (i)-(ii), and on the other hand, the restriction of $u^0$ to $\Omega_1$ belongs to $W^{2,p}(\Omega_1)$ so conditions \eqref{eqn:CompRel:all} ensure that (i)-(ii) are verified.
It remains to prove that (i) and (ii) hold at the corners corresponding to boundary cross-points.
Each of these corners corresponds to an intersection $\partial\Omega_\sigma \cap \Gamma$, with $\sigma\in\{l,b\}$. Let us start with the even symmetric part. In the case we only enforce Dirichlet transmission conditions on $\Gamma_{12}$ and $\Gamma_{41}$, the type of the corner only depends on $\sigma$. If $\sigma\in\mathcal{D}$, then the corner is of Dirichlet type and we already have $u^0_e = g^\mathcal{D}_e$ at this vertex since $u^0$ is compatible with the boundary conditions. If $\sigma\in\mathcal{R}$, then the corner is mixed and, taking the even symmetric part of the condition $(\partial_n+p)u^0\vert_{\partial\Omega_\sigma\cap\Gamma} = g^{\mathcal{R}}_{\sigma}\vert_\Gamma$, we obtain that (ii) is satisfied. Now, we turn to the odd symmetric case, where a Dirichlet boundary condition is enforced on $\Gamma_{41}$ and a Neumann boundary condition is enforced on $\Gamma_{12}$. Let us first focus on the left top corner $\Gamma\cap\partial\Omega_l=\Gamma_{41}\cap\partial\Omega_l$. If $l\in\mathcal{R}$, then condition (ii) must be satisfied, and this is indeed guaranteed by taking the odd symmetric part of the equality $(\partial_n+p)u^0\vert_{\partial\Omega_l\cap\Gamma} = g^{\mathcal{R}}_{l}\vert_\Gamma$. Else, if $l\in\mathcal{D}$, then we already have the continuity condition $u_o^0=g_o^{\mathcal{D}}$ at the corner. As for the bottom right corner of $\Omega_1$, i.e. $\Gamma\cap\partial\Omega_b=\Gamma_{12}\cap\partial\Omega_b$, we need again to distinguish between the cases $b\in\mathcal{R}$ or $\mathcal{D}$. If $b\in\mathcal{R}$, then we have a Robin corner and no additional condition is required for $W^{2,p}$ regularity. Else, if $b\in\mathcal{D}$, then the corner is mixed and condition (ii) is obtained by taking the odd symmetric part of $\partial_\tau u^0\vert_{\partial\Omega_b\cap\Gamma} = (g^{\mathcal{D}}_b)' \vert_{\Gamma}$.
Finally, it follows from Theoreom \ref{thm:RegMixed} that the even and odd symmetric parts of $u_1^1$ both belong to $W^{2,p}(\Omega_1)$, which yields $u_1^1\in W^{2,p}(\Omega_1)$ as well. By symmetry, we deduce that the even and odd symmetric parts of $u_3^1$ also belong to $W^{2,p}(\Omega_3)$, and thus $u_3^1\in W^{2,p}(\Omega_3)$.

\noindent \textbf{Iteration $\boldsymbol{k=1}$, second step:} Now, in
$\Omega_2$, we solve for the even and odd symmetric parts
\begin{equation*}
	\left\{\begin{aligned}
    		-\Delta u_{e,2}^1 &= f_e \: \mbox{ in } \Omega_2 \:, \\
    		\mathcal{B} u_{e,2}^1 &= g_e \: \mbox{ on } \partial\Omega_2^0 \:, \\
    		\partial_{n_2}u_{e,2}^1 &= -\partial_{n_1}u_{e,1}^1  \: \mbox{ on } \Gamma_{12}\:, \\
    		\partial_{n_2}u_{e,2}^1 &= -\partial_{n_3}u_{e,3}^1  \: \mbox{ on } \Gamma_{23}\:,
  	\end{aligned}\right.
  	\quad \mbox{ and } \quad
  	\left\{\begin{aligned}
    		-\Delta u_{o,2}^1 &= f_o \: \mbox{ in } \Omega_2 \:, \\
   		 \mathcal{B} u_{o,2}^1 &= g_o \: \mbox{ on } \partial\Omega_2^0 \:, \\
   		 u_{o,2}^1 &= u_{o,1}^1  \: \mbox{ on } \Gamma_{12} \:, \\
    		\partial_{n_2} u_{o,2}^1 &= -\partial_{n_3}u_{o,3}^1 \: \mbox{ on } \Gamma_{23} \:.
  	\end{aligned}\right.
\end{equation*}
Here again, at the corner which is not contained in $\Gamma$ (bottom-right corner in $\Omega_2$), conditions (i)-(ii) follow from the assumptions on the boundary data. Let us now study the three other corners. In the even symmetric case, the top-left corner is of Robin type so no specific compatibility condition is required. As for boundary cross-points, since $u_1^1\in W^{2,p}(\Omega_1)$ $u_3^1\in W^{2,p}(\Omega_3)$, we know that $u_{e,1}^1$ and $u_{e,3}^1$ satisfy conditions \eqref{eqn:CompRel:all}. Thus, no matter the type of boundary condition enforced on $\partial\Omega_\sigma$ for $\sigma\in \{b,r\}$, conditions (i)-(ii) are necessarily satisfied in $\Omega_2$ as $g^{\mathcal{D}}$ and $g^{\mathcal{R}}$ are continuous at the boundary cross-points.
In the odd symmetric case, the top-left corner is mixed. Using the symmetry relation $u_{o,3}^1(x,y)=-u_{o,1}^1(-x,-y)$ for all $(x,y)\in\overline{\Omega}_3$ and the fact that $u_{o,1}^1$ satisfies \eqref{eqn:CompRel:3}, we obtain that condition (ii) holds at this corner. As for boundary cross-points, we have that $u_{o,1}^1$ and $u_{o,3}^1$ satisfy \eqref{eqn:CompRel:all} so we can use the same argument as in the even symmetric case. Consequently, we conclude that the compatibility relations (i)-(ii) hold in both cases, which means that $u_{e,2}^1$ and $u_{o,2}^1$ are in $W^{2,p}(\Omega_2)$, and therefore $u_2^1\in W^{2,p}(\Omega_2)$. Using symmetry arguments, we also get that $u_4^1\in W^{2,p}(\Omega_4)$.

\noindent \textbf{Iteration $\boldsymbol{k=2}$, first step:}
In $\Omega_1$, we solve for the even symmetric part
\begin{equation*}
	\left\{\begin{aligned}
    		-\Delta u_{e,1}^2 &= f_e \: \mbox{ in } \Omega_1 \:, \\
    		\mathcal{B} u_{e,1}^2 &= g_e \: \mbox{ on } \partial\Omega_1^0 \:, \\
    		u_{e,1}^2 &= \theta u_{e,2}^1 + (1-\theta) u_{e,1}^1 \: \mbox{ on } \Gamma_{12} \:, \\
    		u_{e,1}^2 &= \theta u_{e,4}^1 + (1-\theta) u_{e,1}^1 \: \mbox{ on } \Gamma_{41} \:.
  	\end{aligned}
  	\right.
\end{equation*}
Since all local solutions at iteration $k=1$ have $W^{2,p}$ regularity, the boundary data in the previous system has the required regularity. It remains to check that the compatibility relations are statisfied at each corner of $\Omega_1$.
Using the symmetry relation $u_{e,4}^1(x,y)=u_{e,2}^1(-x,-y)$ for all $(x,y)\in\overline{\Omega}_4$, we get that $u_{e,4}^1(0,0)=u_{e,2}^1(0,0)$ and thus condition (i) is verified at the top-right corner. The appropriate condition also holds at the bottom-left corner since the boundary data of problem \eqref{eqn:ModelPb} are assumed to satisfy (i)-(ii). As for boundary cross-points, let us start with the bottom-right corner $\Gamma_{12}\cap\partial\Omega_b$. No matter the boundary condition enforced on $\partial\Omega_b$, we know that $u_{e,1}^1$ and $u_{e,2}^1$ satisfy the proper compatibility relation since they both have $W^{2,p}$ regularity. Therefore the convex combination $\theta u_{e,2}^1+(1-\theta)u_{e,1}^1$ satisfies the same condition. Applying the same reasoning for the top-left corner, we conclude that all the assumptions of Theorem \ref{thm:RegMixed} are verified, hence $u_{e,1}^2\in W^{2,p}(\Omega_1)$. By symmetry, we also have a similar result in $\Omega_3$, that is $u_{e,3}^2\in W^{2,p}(\Omega_3)$. \\
Let us now turn to the odd symmetric case. In $\Omega_1$, the odd symmetric part of the local solution solves
\begin{equation*}  	
  	\left\{\begin{aligned}
    		-\Delta u_{o,1}^2 &= f_o \: \mbox{ in } \Omega_1 \:, \\
   		 \mathcal{B} u_{o,1}^2 &= g_o \: \mbox{ on } \partial\Omega_1^0 \:, \\
   		 u_{o,1}^2 &= \theta u_{o,4}^1 + (1-\theta) u_{o,1}^1  \: \mbox{ on } \Gamma_{41} \:, \\
    		\partial_{n_1} u_{o,1}^2 &= -\theta \partial_{n_2}u_{o,2}^1 + (1-\theta) \partial_{n_1}u_{o,1}^1 \: \mbox{ on } \Gamma_{12} \:.
  	\end{aligned}\right.
\end{equation*}
Using the same arguments as in the even symmetric case, we can prove that the regularity assumption on the boundary data as well as the compatibility relations at the bottom-left, bottom-right and top-left corners are satisfied. As for the top-right corner, since $u_{o,1}^1\in W^{2,p}(\Omega_1)$, we get that the pair $(u_{o,1}^1,\partial_{n_1}u_{o,1}^1)$ satisfies (ii). As for the pair $(u_{o,4}^1,-\partial_{n_2}u_{o,2}^1)$, we have at this corner
\begin{equation*}
	\begin{aligned}
		(u_{o,4}^1)'(0) &= \partial_x ( u_{o,4}^1(x,0) )\vert_{x=0} = -\partial_x ( u_{o,2}^1(-x,0) ) \vert_{x=0} \\
		&= \partial_x u_{o,2}^1 (0,0) = -(\partial_{n_2}u_{o,2}^1)(0)\:,
	\end{aligned}
\end{equation*}
which means that this pair also satisfies (ii). Hence, the convex combination of these two pairs still satisfies (ii) and it follows that $u_{o,1}^2\in W^{2,p}(\Omega_1)$. By symmetry, we also get that $u_{o,3}^2\in W^{2,p}(\Omega_3)$. Gathering the results for the even and odd symmetric cases, we end up with $u_{1}^2\in W^{2,p}(\Omega_1)$ and $u_{3}^2\in W^{2,p}(\Omega_3)$.

\noindent \textbf{Iteration $\boldsymbol{k=2}$, second step:}
Since there is no convex combination in the transmission conditions here, the proof is exactly the same as the one for the second step of iteration $k=1$.

\noindent \textbf{Iteration $\boldsymbol{k\geq 3}$:}
The rest of the proof follows by induction. Indeed, proving the result for any $k\geq 3$ assuming it holds for $k-1$ can be done by following the exact same steps as in proving the result for $k=2$.
\end{proof}

Due to this result, we know that under the appropriate assumptions on the initial guess $u^0$, in each subdomain $\Omega_i$, the local solution obtained at any iteration $k$ is continuous on $\overline{\Omega}_i$, and both its trace and normal derivative are continuous on $\partial\Omega_i$. 

\begin{remark}
	Even if the conditions on $u^0$ appear at first sight difficult to realize in practice, it is actually quite easy to build such an initial guess in a systematic way using piecewise polynomial functions. More specifically, let us denote each boundary cross-point by $\mathrm{x}_i := \Gamma_{i,i+1}\cap\partial\Omega$ for each $i\in\mathcal{I}$. The conditions on $u^0$ mean that at each $\mathrm{x}_i$, we must have
\begin{equation}
	\left\{
	\begin{aligned}
		u^0(\mathrm{x}_i) &= \alpha_i, \\
		\partial_n u^0(\mathrm{x}_i) &= \beta_i, \\
		\partial_\tau u^0(\mathrm{x}_i) &= \gamma_i,
	\end{aligned}	
	\right.
	\label{eqn:Condu0}
\end{equation} 
	for some given $\alpha_i$, $\beta_i$ and $\gamma_i$ depending on the data. First, in order to satisfy the first two conditions in \eqref{eqn:Condu0}, we can build for each $i\in\mathcal{I}$ a piecewise polynomial function $p_i\in W^{2,p}(\Omega)$ such that
\begin{equation*}
	\left\{
	\begin{aligned}
		p_i(\mathrm{x}_i) &= \alpha_i, \\
		\partial_n p_i(\mathrm{x}_i) &= \beta_i, \\
		\partial_\tau p_i(\mathrm{x}_i) &= 0,
	\end{aligned}	
	\right.
	\quad \mbox{ and } \quad
	\left\{
	\begin{aligned}
		p_i(\mathrm{x}_j) &= 0, \\
		\partial_n p_i(\mathrm{x}_j) &= 0, \\
		\partial_\tau p_i(\mathrm{x}_j) &= 0,
	\end{aligned}	
	\right.	
\end{equation*} 
	for each $j\neq i$. 
	Then, in order to satisfy the third condition in \eqref{eqn:Condu0}, we build another piecewise polynomial function $q_i\in W^{2,p}(\Omega)$ such that 
\begin{equation*}
	\left\{
	\begin{aligned}
		q_i(\mathrm{x}_i) &= 0, \\
		\partial_n q_i(\mathrm{x}_i) &= 0, \\
		\partial_\tau q_i(\mathrm{x}_i) &= \gamma_i,
	\end{aligned}	
	\right.
	\quad \mbox{ and } \quad
	\left\{
	\begin{aligned}
		q_i(\mathrm{x}_j) &= 0, \\
		\partial_n q_i(\mathrm{x}_j) &= 0, \\
		\partial_\tau q_i(\mathrm{x}_j) &= 0,
	\end{aligned}	
	\right.	
\end{equation*}
	for $j\neq i$. This ensures that the choice $u^0:=\sum_{i\in\mathcal{I}}(p_i+q_i)$ satisfies all conditions in \eqref{eqn:Condu0}.
	Taking for example $i=2$, we can set
\begin{equation*}
	p_2(x,y) = 
	\left\{
	\begin{aligned}
		& a_2x^3+b_2x^2 & \mbox{in } \Omega_2\cup\Omega_3, \\
		& 0 & \mbox{in } \Omega_1\cup\Omega_4,
	\end{aligned}	
	\right.
	\quad \mbox{ with }
	\left\{
	\begin{aligned}
		a_2+b_2 &= \alpha_2, \\
		3a_2+2b_2 &= \beta_2,
	\end{aligned}	
	\right.	
\end{equation*}
\begin{equation*}
	q_2(x,y) = 
	\left\{
	\begin{aligned}
		& \gamma_2 xy^2 & \mbox{in } \Omega_2\cup\Omega_3, \\
		& 0 & \mbox{in } \Omega_1\cup\Omega_4.
	\end{aligned}	
	\right.	
\end{equation*}
	And we build the other $p_i$, $q_i$ in a similar fashion. This provides us with an analytical technique to construct a suitable initial guess for any boundary data.
\end{remark}

\subsection{Convergence analysis}

We already know that our new method is very efficient for dealing with the even symmetric part. This is a direct consequence of \cite[Theorem 6 and Theorem 13]{chaudet2022cross1}, which we recall here. In what follows, let $u^0$ be an initial guess such that $u^0$ is compatible with the boundary conditions.
 
\begin{theorem}
  Taking $u^0_e$ as the initial guess for the new Dirichlet-Neumann
  method applied to the even symmetric part of problem
  \eqref{eqn:ModelPb} produces a sequence $\{u_e^k\}_k$ that
  converges geometrically to the solution $u_e$ in the $L^2$-norm and
  the broken $H^1$-norm, for any $\theta\in (0,1)$. Moreover, the
  convergence factor is given by $\vert1-2\theta\vert$, which also proves that this method becomes a direct solver for the
  specific choice $\theta=\frac{1}{2}$.
    \label{thm:GeoCvgEvenDirDN}
\end{theorem}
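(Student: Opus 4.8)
The plan is to reduce the statement to the analysis of the first part by showing that, on the even symmetric component, the new Dirichlet-Neumann method coincides with the classical one. First I would note that the even and odd sweeps of the algorithm are entirely decoupled, so the sequence $\{u_e^k\}_k$ is generated on its own by: solving, in the first step, Dirichlet subproblems in the white subdomains $\Omega_1,\Omega_3$ with the relaxed interface data $\theta u_{e,j}^{k-1}+(1-\theta)u_{e,i}^{k-1}$ on every $\Gamma_{ij}$ with $j\in\mathcal I_G$; and solving, in the second step, Neumann subproblems in the gray subdomains $\Omega_2,\Omega_4$ with the fluxes $-\partial_{n_j}u_{e,j}^{k}$ inherited from the white ones. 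Comparing with Figure \ref{sub:SchDDstd}, these are exactly the transmission conditions of the standard Dirichlet-Neumann method. Hence $\{u_e^k\}_k$ is, term by term, the sequence produced by the classical Dirichlet-Neumann method applied to the even symmetric subproblem \eqref{subeqn:EvenModelPb} with initial guess $u_e^0$; for these even subproblems each local solve is a standard, well-posed Dirichlet or Neumann problem in a square (the cross-point pathology is absent for the even component, cf. \cite{chaudet2022cross1}), so only $u_e^0\in H^1(\Omega)$ is needed for the iteration to make sense, which holds since $u^0\in W^{2,p}(\Omega)$.

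Next, because the boundary conditions are assumed symmetric, Lemma \ref{lem:EvenOddDecSol} identifies the unique solution $u^e$ of \eqref{subeqn:EvenModelPb} with the even symmetric part $u_e$ of the solution $u$ of \eqref{eqn:ModelPb}. The convergence of $\{u_e^k\}_k$ to $u_e$ --- geometrically in the $L^2$-norm and in the broken $H^1$-norm, with factor $|1-2\theta|$ for every $\theta\in(0,1)$, and in one step when $\theta=\tfrac12$ --- is then precisely \cite[Theorem 6 and Theorem 13]{chaudet2022cross1}, which we are here simply restating.

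For completeness I would recall the mechanism behind the factor $|1-2\theta|$: expanding the interface error in the Fourier basis adapted to the even symmetry and using that symmetry to fold $\Omega_3,\Omega_4$ onto $\Omega_1,\Omega_2$, the iteration reduces mode by mode to a Dirichlet-Neumann iteration between two mirror-symmetric subdomains, whose Steklov-Poincaré operators coincide; the unrelaxed step therefore maps each modal error to its negative, so the relaxed step acts as multiplication by $1-2\theta$ on every mode, giving a frequency-independent contraction factor $|1-2\theta|$ and a direct solver at $\theta=\tfrac12$.

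Since this is a corollary, there is no genuine obstacle; the only point requiring care is the bookkeeping establishing the term-by-term identification of the two iterations (and, if one wants the $W^{2,p}$-level statement of Theorem \ref{thm:WellPosed} for the even iterates as well, that the even part of a compatible initial guess is again compatible, which follows exactly as in the proof of Theorem \ref{thm:WellPosed}). Once this is done, the result is immediate from the first part of this work.
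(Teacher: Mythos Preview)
Your proposal is correct and takes essentially the same approach as the paper: both recognize that on the even symmetric component the new method coincides with the standard Dirichlet-Neumann method, and both reduce the result to \cite[Theorem 6 and Theorem 13]{chaudet2022cross1}. The one nuance you gloss over is that the present statement is slightly more general than those theorems (mixed symmetric Dirichlet-Robin boundary conditions are now allowed, not just the full Dirichlet or full Robin cases), so it is not literally a restatement; the paper's proof makes this explicit and observes, via Lemma~\ref{lem:EvenOddDecSol}, that the symmetry arguments on the local errors used in \cite{chaudet2022cross1} carry over verbatim to this broader setting---but this is exactly the ingredient you already invoke, so no substantive idea is missing.
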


\begin{proof}
	This result is slightly more general than the ones in \cite{chaudet2022cross1}. Indeed, only the full Dirichlet and Robin cases (i.e. $\mathcal{R}=\emptyset$ or $\mathcal{D}=\emptyset$) were treated, while we cover here all types of symmetric boundary conditions. Since we have seen in the proof of Lemma \ref{lem:EvenOddDecSol} that the notions of even/odd symmetry were preserved when the boundary conditions are symmetric, all the arguments in the proof of \cite[Theorem 6]{chaudet2022cross1} based on symmetry properties of the local errors remain valid here. Thus, following the exact same steps, we end up with a similar estimate for the recombined error in the $L^2$ and broken $H^1$ norms.
\end{proof}

Now, given the choice of transmission conditions for our new Dirichlet-Neumann method, we are able to prove the same result for the odd symmetric part.
\begin{theorem}
  Taking $u^0_o$ as the initial guess for the new Dirichlet-Neumann
  method applied to the odd symmetric part of problem
  \eqref{eqn:ModelPb} produces a sequence $\{u_o^k\}_k$ that
  converges geometrically to the solution $u_o$ in the $L^2$-norm and
  the broken $H^1$-norm, for any $\theta\in (0,1)$. Moreover, the
  convergence factor is given by $\vert1-2\theta\vert$, which also proves that this method becomes a direct solver for the
  specific choice $\theta=\frac{1}{2}$.
    \label{thm:GeoCvgOddDirDN}
\end{theorem}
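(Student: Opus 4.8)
The plan is to reduce the odd symmetric problem to a single-subdomain analysis, just as one does for strip decompositions, by exploiting the symmetry of the decomposition. The key observation is that the transmission conditions in Figure~\ref{sub:SchDDnew} have been chosen precisely so that, along the interface $\Gamma$, the odd symmetric iterates satisfy a genuine Dirichlet--Neumann alternating pattern around the cross-point: on each of the four interface segments one subdomain imposes a Dirichlet condition and the neighbour imposes a Neumann condition, and these roles alternate consistently as one goes around the cross-point. Because of this alternation, no subproblem ever receives two Dirichlet (or two Neumann) conditions meeting at the cross-point, which is exactly the pathology that rendered the standard method ill-posed for the odd part in \cite{chaudet2022cross1}; well-posedness has already been secured in Theorem~\ref{thm:WellPosed}. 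First I would pass to the error equations: setting $e_{o,i}^k := u_{o,i}^k - u_o|_{\Omega_i}$, these satisfy $-\Delta e_{o,i}^k = 0$ in $\Omega_i$, homogeneous conditions $\mathcal{B}e_{o,i}^k = 0$ on $\partial\Omega_i^0$, and the same Dirichlet/Neumann transmission conditions as the iterates but now linking the errors.

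Next I would use the odd symmetry together with the geometric symmetry of the partition to collapse the four subdomain errors into one. Since $u_o$ is odd symmetric and (by the argument already used in Theorems~\ref{thm:WellPosed} and \ref{thm:GeoCvgEvenDirDN}) the odd iterates inherit the symmetry relations $e_{o,3}^k(x,y) = -e_{o,1}^k(-x,-y)$ and $e_{o,4}^k(x,y) = -e_{o,2}^k(-x,-y)$, it suffices to track $e_{o,1}^k$ and $e_{o,2}^k$. I would then introduce, on the reference subdomain, the appropriate Steklov--Poincaré / Dirichlet-to-Neumann operators associated with the mixed boundary value problems that arise (Laplace with the physical boundary condition $\mathcal{B}\cdot = 0$ on the outer part and Dirichlet or Neumann data on the interface edges), and express one full iteration $e_o^{k-1} \mapsto e_o^k$ as the composition of: the white-subdomain solves with the relaxed data, followed by the grey-subdomain solves. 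This is the standard DN substructuring reformulation; the symmetry reduction guarantees that the resulting interface iteration operator is well defined on a space of traces that are compatible across the cross-point.

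The heart of the argument is then to show this interface operator is a contraction with factor exactly $|1-2\theta|$. Here I would expand the interface errors (on the relevant interface segments, after the symmetry reduction) in the eigenbasis of the one-dimensional Laplacian with the boundary conditions inherited from the outer sides — i.e. a Fourier sine/cosine series adapted to whether the adjacent outer side carries a Dirichlet or Robin condition. For each Fourier mode, the one-dimensional harmonic extension reduces the Steklov--Poincaré map to multiplication by a factor of the form $\coth$ or $\tanh$ of (mode number times subdomain width), and because the two subdomains involved (white then grey) have equal width, the product of the Dirichlet-to-Neumann factor from one side and the Neumann-to-Dirichlet factor from the other collapses to exactly $1$ per mode — this is the algebraic miracle, identical in spirit to the strip case, and it is why the relaxed iteration contracts by $|1-2\theta|$ uniformly in the mode. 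Summing the contracted modes back up gives geometric decay of the interface error in $L^2(\Gamma)$; lifting by the (bounded) harmonic extension operators transfers this to the broken $H^1$-norm and the $L^2(\Omega)$-norm of $e_o^k$, and $\theta = \tfrac12$ annihilates every mode in one step, giving the direct-solver statement. I expect the main obstacle to be bookkeeping rather than analysis: verifying carefully that the alternation of Dirichlet/Neumann conditions around the cross-point really does produce, after the odd-symmetry reduction, a clean two-subdomain strip-type iteration on each interface edge with matching widths — and checking that the cross-point compatibility of traces is preserved under the iteration so that the Fourier expansions on the different edges are consistent there. Once that reduction is in place, the per-mode computation is essentially the one already carried out for the even part in \cite[Theorem~6]{chaudet2022cross1}, and I would invoke it by analogy rather than redo it.
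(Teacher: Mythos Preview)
Your proposal takes a substantially different route from the paper, and the key difference is also where the gap lies. The paper's proof uses \emph{no} Fourier analysis and \emph{no} Steklov--Poincar\'e operators. Instead, after writing the error equations and using odd symmetry to get $e_{o,3}^1(x,y)=-e_{o,1}^1(-x,-y)$ (which you do have), it observes a second, independent reflection relation: the function $(x,y)\mapsto e_{o,1}^1(-x,y)$ exactly solves the $\Omega_2$ subproblem in the second half-step, so by uniqueness $e_{o,2}^1(x,y)=e_{o,1}^1(-x,y)$ and likewise $e_{o,4}^1(x,y)=-e_{o,1}^1(x,-y)$. Feeding these four relations into the relaxed transmission conditions at the next half-step collapses both the Dirichlet and the Neumann data on $\partial\Omega_1\cap\Gamma$ to exactly $(1-2\theta)$ times their previous values, whence $e_{o,1}^2=(1-2\theta)e_{o,1}^1$ in all of $\Omega_1$, and induction finishes. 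The contraction factor is obtained \emph{pointwise in $\Omega$}, not mode by mode.

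Your plan stops the symmetry reduction one step too early: you collapse four subdomains to two via odd symmetry, but you never identify the reflection $e_{o,2}^k(x,y)=e_{o,1}^k(-x,y)$ linking the white and grey solves. Without it, the object you are left analysing is \emph{not} a strip: $\Omega_1$ carries interface data on two perpendicular edges ($\Gamma_{14}$ Dirichlet, $\Gamma_{12}$ Neumann), and the iteration map acts on the pair of traces on this L-shaped interface. There is no single one-dimensional eigenbasis that simultaneously diagonalises the harmonic extension from the horizontal edge and from the vertical edge, so the claim that ``the product of the Dirichlet-to-Neumann factor from one side and the Neumann-to-Dirichlet factor from the other collapses to exactly $1$ per mode'' is not available---that identity relies on the two subdomains sharing a \emph{single straight} interface. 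What you flag as ``bookkeeping'' (checking that the odd-symmetry reduction yields a clean strip iteration) is in fact the whole point: it does not, unless you first invoke the additional reflection symmetry across the $y$-axis, and once you have that symmetry the strip/Fourier machinery is no longer needed at all.
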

\begin{proof}
We proceed as in the proof of \cite[Theorem 6]{chaudet2022cross1}. First, we rewrite the problem in terms of the local errors $e_i^k:=u_i-u_i^k$ where $u_i:=u\vert_{\Omega_i}$ is the restriction of the original solution to the $i-$th subdomain. Then we perform the first iterations of the method.

\noindent \textbf{Iteration $\boldsymbol{k=1}$, first step:} In
$\Omega_1$, the local error satisfies
\begin{equation*}
  \left\{\begin{aligned}
    -\Delta e_{o,1}^1 &= 0 \: \mbox{ in } \Omega_1 \:, \\
    \mathcal{B} e_{o,1}^1 &= 0 \: \mbox{ on } \partial\Omega_1^0 \:, \\
    e_{o,1}^1 &= u_o-u^0_o  \: \mbox{ on } \Gamma_{14} \:, \\
    \partial_{n_1} e_{o,1}^1 &= \partial_{n_1}(u_o-u^0_o) \: \mbox{ on } \Gamma_{12} \:.
  \end{aligned}\right.
\end{equation*}
Since $u^0_o$ is compatible with the odd part of the Dirichlet
boundary condition, we know from Theorem \ref{thm:WellPosed} that $e_{o,1}^1$ exists and is unique in $W^{2,p}(\Omega_1)$. \\
In the same way, the local error satisfies in $\Omega_3$
\begin{equation*}
  \left\{\begin{aligned}
    -\Delta e_{o,3}^1 &= 0 \: \mbox{ in } \Omega_3 \:, \\
    \mathcal{B} e_{o,3}^1 &= 0 \: \mbox{ on } \partial\Omega_1^0 \:, \\
    e_{o,3}^1 &= u_o-u^0_o  \: \mbox{ on } \Gamma_{32} \:, \\
    \partial_{n_3} e_{o,3}^1 &= \partial_{n_3}(u_o-u^0_o) \: \mbox{ on } \Gamma_{34} \:.
  \end{aligned}\right.
\end{equation*}
By the same arguments as before, this problem is well-posed in $W^{2,p}(\Omega_3)$. Since $u_o-u^0_o$ is odd symmetric, it follows that the only
solution $e_{o,3}^1$ to this problem verifies
$e_{o,3}^1(x,y) = -e_{o,1}^1(-x,-y)$, for all $(x,y)\in
\overline{\Omega}_3$.

\noindent \textbf{Iteration $\boldsymbol{k=1}$, second step:} Now, in
$\Omega_2$, the local error satisfies
\begin{equation*}
  \left\{  \begin{aligned}
    -\Delta e_{o,2}^1 &= 0 \: \mbox{ in } \Omega_2 \:, \\
    \mathcal{B} e_{o,2}^1 &= 0 \: \mbox{ on } \partial\Omega_2^0 \:, \\
    e_{o,2}^1 &= e_{o,1}^1 = (e_{o,1}^1(-x,y))\vert_{x=0}  \: \mbox{ on } \Gamma_{21} \:,\\
   \partial_y e_{o,2}^1 &= \partial_y e_{o,3}^1 = \partial_y (-e_{o,1}^1(-x,-y))\vert_{y=0} = \partial_y (e_{o,1}^1(-x,y))\vert_{y=0} \: \mbox{ on } \Gamma_{23} \:.
  \end{aligned}\right.
\end{equation*}
This problem admits a unique solution in $W^{2,p}(\Omega_2)$. Moreover, the
function defined in $\Omega_2$ by $(x,y)\mapsto e_{o,1}^1(-x,y)$
solves the problem. Therefore, by uniqueness, we get that $e_{o,2}^1(x,y) = e_{o,1}^1(-x,y)$, for all
$(x,y)\in\overline{\Omega}_2$. Again, in the exact same way, we get that the
solution $e_{o,4}^1$ in $\Omega_4$ is given by $e_{o,4}^1(x,y) =
-e_{o,1}^1(x,-y)$, for all $(x,y)\in\overline{\Omega}_4$. A numerical illustration of these symmetry properties can be found in Figure \ref{sub:Err1Th045Ex2}.

Note that, due to the odd symmetry of $u_o$ and $u_o^0$, one has $e_{o,1}^1(0,0)=(u_o-u_o^0)(0,0)=0$. Therefore the recombined error $e_o^1$ (defined in $\Omega\setminus\Gamma$) obtained after the first iteration is continuous at the cross-point. Actually, given its symmetry properties, it is also continuous on $\Gamma_{12}\cup\Gamma_{34}$. However, it might be discontinuous across the rest of the skeleton $\Gamma_{23}\cup\Gamma_{41}$. This may lead to
discontinuities for the recombined solution $u_o^1=u_o+e_o^1$. In addition, $e_o^1$ is odd symmetric in $\Omega\setminus\Gamma$.
        
\noindent\textbf{Iteration $\boldsymbol{k=2}$, first step:} In
$\Omega_1$, the local error satisfies
\begin{equation*}
   \left\{  \begin{aligned}
     -\Delta e_{o,1}^2 &= 0 \: \mbox{ in } \Omega_1 \:, \\
     \mathcal{B} e_{o,1}^2 &= 0 \: \mbox{ on } \partial\Omega_1^0 \:, \\
     e_{o,1}^2 &= \theta e_{o,4}^1+(1-\theta)e_{o,1}^1 = (1-2\theta)e_{o,1}^1  \: \mbox{ on } \Gamma_{14} \:,\\
     \partial_x e_{o,1}^2 &= \theta \partial_x e_{o,2}^1+(1-\theta)\partial_x e_{o,1}^1 = (1-2\theta) \partial_x e_{o,1}^1 \: \mbox{ on } \Gamma_{12} \:.
   \end{aligned}\right.
\end{equation*}
The unique solution to this problem is $e_{o,1}^2 =
(1-2\theta)e_{o,1}^1$.  Similarly, we obtain in $\Omega_3$,
$e_{o,3}^2 = (1-2\theta)e_{o,3}^1$.
      
\noindent\textbf{Iteration $\boldsymbol{k=2}$, second step:} Now, in
$\Omega_2$, the local error satisfies
\begin{equation*}
   \left\{  \begin{aligned}
         -\Delta e_{o,2}^2 &= 0 \: \mbox{ in } \Omega_2 \:, \\
         \mathcal{B} e_{o,2}^2 &= 0 \: \mbox{ on } \partial\Omega_2^0 \:, \\
         e_{o,2}^2 &= e_{o,1}^2 = (e_{o,1}^2(-x,y))\vert_{x=0}  \: \mbox{ on } \Gamma_{12} \:,\\
         \partial_y e_{o,2}^2 &= \partial_y e_{o,3}^2 = \partial_y (e_{o,1}^2(-x,y))\vert_{y=0} \: \mbox{ on } \Gamma_{23} \:.
         \end{aligned}
         \right.
\end{equation*}
The unique solution to this problem is given by $e_{o,2}^2(x,y) = e_{o,1}^2(-x,y) =
(1-2\theta)e_{o,2}^1(x,y)$ for all $(x,y)\in\overline{\Omega}_2$.  Again, in the exact same way, we get in $\Omega_4$, $e_{o,4}^2 = (1-2\theta)e_{o,4}^1$. Therefore, for all
$\theta\in(0,1)$, the recombined error solution $e_o^2$ is exactly
$(1-2\theta)e_o^1$.

\noindent \textbf{Iterations $\boldsymbol{k\geq 3}$:} Given the behaviour of the method for the first two iterations, it follows by induction that, at
iteration $k$, one has
\begin{equation*}
    e_o^k = (1-2\theta)^{k-1} e_o^1 \hspace{1em} \mbox{ in } \Omega\setminus\Gamma\:,
\end{equation*}
see Figure \ref{sub:Err7Th045Ex1}.     
This proves that our new Dirichlet-Neumann method converges geometrically to the solution
$u_o$ both in the $L^2$-norm and the broken $H^1$-norm for all
$\theta\in(0,1)$,
\begin{equation*}
        \parallel u_o - u_o^k\parallel_{L^2(\Omega)} \: \leq \: C \vert1-2\theta\vert^{k-1} \hspace{0.5em} \mbox{and} \hspace{0.5em} \sum_{i\in \mathcal{I}} \parallel u_{o,i} - u_{o,i}^k\parallel_{H^1(\Omega_i)} \: \leq \: C' \vert1-2\theta\vert^{k-1} \:.
\end{equation*}
$\:$
\end{proof}

Now that we have established the good properties of our new method when dealing with both the even and odd symmetric parts of problem \eqref{eqn:ModelPb}, we can state our main result, which follows directly from the previous theorems.

\begin{cor}
	For any $u^0$ compatible with the boundary condition, our new Dirichlet-Neumann method applied to \eqref{eqn:ModelPb} produces a sequence $\{u^k\}_k$ that converges geometrically to the solution $u$ in the $L^2$-norm and the broken $H^1$-norm, for any $\theta\in (0,1)$. Moreover, the convergence factor is given by $\vert1-2\theta\vert$, which also proves that this method becomes a direct solver for the specific choice $\theta=\frac{1}{2}$.
    \label{cor:GeoCvgDirDN}
\end{cor}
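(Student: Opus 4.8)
The plan is to deduce Corollary \ref{cor:GeoCvgDirDN} directly from Theorem \ref{thm:GeoCvgEvenDirDN} and Theorem \ref{thm:GeoCvgOddDirDN} by recombining the even and odd symmetric components. First I would note that, since the boundary conditions in \eqref{eqn:ModelPb} are assumed to be symmetric, Lemma \ref{lem:EvenOddDecSol} guarantees that the unique solution $u$ to \eqref{eqn:ModelPb} decomposes as $u = u_e + u_o$, where $u_e$ and $u_o$ are respectively the unique solutions to \eqref{subeqn:EvenModelPb} and \eqref{subeqn:OddModelPb}. Next I would observe that, by the very definition of the new Dirichlet-Neumann method in Section \ref{sec:NewDN}, each iteration is split into a step acting on the even symmetric part (with the transmission conditions of Figure \ref{sub:SchDDstd}) and a step acting on the odd symmetric part (with the transmission conditions of Figure \ref{sub:SchDDnew}), and these two sub-iterations are completely decoupled: the even iterate $u_e^k$ depends only on $u_e^0$ and the even data, while the odd iterate $u_o^k$ depends only on $u_o^0$ and the odd data. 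Hence, writing $u^0 = u_e^0 + u_o^0$ for the even/odd decomposition of the (compatible) initial guess, the iterates produced by the method satisfy $u^k = u_e^k + u_o^k$ for all $k \geq 1$, where $\{u_e^k\}_k$ is exactly the sequence generated by applying the method to the even symmetric part with initial guess $u_e^0$, and $\{u_o^k\}_k$ is the sequence generated by applying it to the odd symmetric part with initial guess $u_o^0$.

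The second ingredient is to check that the hypotheses of Theorems \ref{thm:GeoCvgEvenDirDN} and \ref{thm:GeoCvgOddDirDN} are met, i.e. that $u_e^0$ and $u_o^0$ are individually compatible with the (even, resp. odd) boundary conditions in the sense of Definition \ref{def:CompatibleIG}. This follows because the compatibility conditions in Definition \ref{def:CompatibleIG} are linear in $u^0$ and in the data, and the even/odd parts are obtained by the linear averaging formulas of Theorem \ref{thm:DecEvenOdd}; moreover $W^{2,p}(\Omega)$ is stable under the map $(x,y)\mapsto(-x,-y)$, so $u_e^0, u_o^0 \in W^{2,p}(\Omega)$ whenever $u^0$ is. This is essentially the same observation already used in the proof of Lemma \ref{lem:EvenOddDecSol} and invoked again in the proof of Theorem \ref{thm:GeoCvgEvenDirDN}, so I would state it briefly rather than belabor it.

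With these two points in hand, the conclusion is immediate: Theorem \ref{thm:GeoCvgEvenDirDN} gives $\|u_e - u_e^k\|_{L^2(\Omega)} \leq C_e |1-2\theta|^{k-1}$ and $\sum_{i\in\mathcal{I}} \|u_{e,i} - u_{e,i}^k\|_{H^1(\Omega_i)} \leq C_e' |1-2\theta|^{k-1}$, and Theorem \ref{thm:GeoCvgOddDirDN} gives the analogous bounds for the odd part with constants $C_o, C_o'$. Using $u - u^k = (u_e - u_e^k) + (u_o - u_o^k)$ together with the triangle inequality in $L^2(\Omega)$ and in the broken $H^1$ norm $\sum_{i\in\mathcal{I}}\|\cdot\|_{H^1(\Omega_i)}$ yields
\begin{equation*}
    \|u - u^k\|_{L^2(\Omega)} \leq (C_e + C_o)\,|1-2\theta|^{k-1}, \qquad \sum_{i\in\mathcal{I}} \|u_i - u_i^k\|_{H^1(\Omega_i)} \leq (C_e' + C_o')\,|1-2\theta|^{k-1},
\end{equation*}
which is precisely geometric convergence with factor $|1-2\theta|$, valid for any $\theta\in(0,1)$, and reducing to a direct solver ($u^1 = u$) when $\theta = \tfrac12$. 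There is no real obstacle here; the only point requiring a word of care is the decoupling claim $u^k = u_e^k + u_o^k$, which must be verified by inspecting the two steps of the algorithm and confirming that no even/odd coupling is introduced by the transmission conditions or the relaxation — but this is transparent from the way the method was written down in Section \ref{sec:NewDN}, since the even and odd subproblems are literally posed and solved separately at every step.
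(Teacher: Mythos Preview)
Your proposal is correct and follows exactly the approach of the paper, which simply states that the corollary ``follows directly from the previous theorems'' without spelling out the details. You have merely made explicit the decoupling $u^k = u_e^k + u_o^k$, the compatibility of $u_e^0$ and $u_o^0$, and the triangle-inequality recombination that the paper leaves implicit.
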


\begin{remark}
	At each iteration $k$, due to the symmetry properties of $u_e^k$ and $u_o^k$, it is enough to solve the subproblems in $\Omega_1$ and $\Omega_2$ only. This implies that our method does not even require additional computational work compared to the original Dirichlet-Neumann method. Indeed, solving seperately for the even and odd symmetric parts in two subdomains costs the same as solving for the whole solution in four subdomains.
\end{remark}

\section{Extension to three dimensions}
\label{sec:3D}

In this section, we show that the interesting properties of our new DN method remain valid in three dimensions.
Indeed, in the proofs of the previous results, the key ingredients are the symmetry properties of the functions with respect to the cartesian coordinate system. Therefore, the extension of these results to three dimensions seems quite natural. Let us consider the cube $\Omega = (-1,1)^3\subset \mathbb{R}^3$ divided into four subdomains as shown in Figure \ref{fig:Sch3D4subd}.

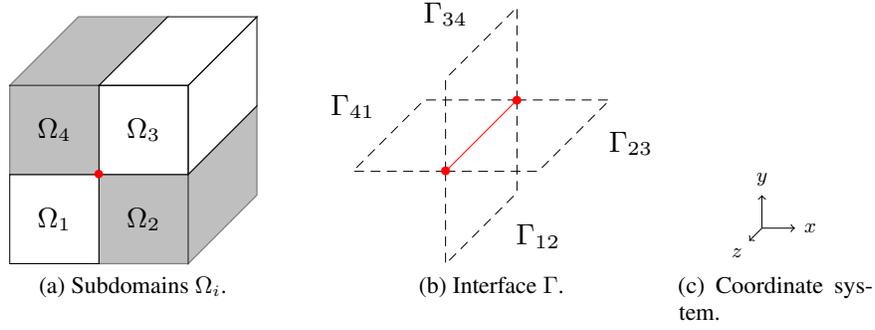
\begin{figure}
\begin{center}
\subfloat[Subdomains $\Omega_i$.]{   
\resizebox{!}{0.2\textwidth}{
\begin{tikzpicture}

	\pgfmathsetmacro{\cubex}{1}
	\pgfmathsetmacro{\cubey}{1}
	\pgfmathsetmacro{\cubez}{1}

	\draw[very thin] (0,0,\cubez) -- ++(-\cubex,0,0) -- ++(0,-\cubey,0) -- ++(\cubex,0,0) -- cycle;
	\draw[fill=gray,opacity=0.5] (\cubex,0,\cubez) -- ++(-\cubex,0,0) -- ++(0,-\cubey,0) -- ++(\cubex,0,0) -- cycle;
	\draw[fill=gray,opacity=0.5] (\cubex,0,\cubez) -- ++(0,0,-2*\cubez) -- ++(0,-\cubey,0) -- ++(0,0,2*\cubez) -- cycle;
	\draw[very thin] (\cubex,\cubey,\cubez) -- ++(-\cubex,0,0) -- ++(0,-\cubey,0) -- ++(\cubex,0,0) -- cycle;
	\draw[very thin] (\cubex,\cubey,\cubez) -- ++(0,0,-2*\cubez) -- ++(0,-\cubey,0) -- ++(0,0,2*\cubez) -- cycle;
	\draw[very thin] (\cubex,\cubey,\cubez) -- ++(-\cubex,0,0) -- ++(0,0,-2*\cubez) -- ++(\cubex,0,0) -- cycle;
	\draw[fill=gray,opacity=0.5] (0,\cubey,\cubez) -- ++(-\cubex,0,0) -- ++(0,-\cubey,0) -- ++(\cubex,0,0) -- cycle;
	\draw[fill=gray,opacity=0.5] (0,\cubey,\cubez) -- ++(-\cubex,0,0) -- ++(0,0,-2*\cubez) -- ++(\cubex,0,0) -- cycle;

    \node[] at (-0.5*\cubex,-0.5*\cubey,\cubez) {\small{$\Omega_1$}};
    \node[] at (0.5*\cubex,-0.5*\cubey,\cubez) {\small{$\Omega_2$}};
    \node[] at (0.5*\cubex,0.5*\cubey,\cubez) {\small{$\Omega_3$}};
    \node[] at (-0.5*\cubex,0.5*\cubey,\cubez) {\small{$\Omega_4$}};
     
    \node[red] at (0,0,\cubez) {\tiny{$\bullet$}};
    
\end{tikzpicture}
}
}
\hspace{1em}
\subfloat[Interface $\Gamma$.]{   
\resizebox{!}{0.22\textwidth}{
\begin{tikzpicture}

	\pgfmathsetmacro{\cubex}{1}
	\pgfmathsetmacro{\cubey}{1}
	\pgfmathsetmacro{\cubez}{1}

	\draw[very thin,densely dashed] (0,0,\cubez) -- ++(-\cubex,0,0) -- ++(0,0,-2*\cubez) -- ++(\cubex,0,0) ;
	\draw[very thin,densely dashed] (0,0,\cubez) -- ++(\cubex,0,0) -- ++(0,0,-2*\cubez) -- ++(-\cubex,0,0) ;
	\draw[very thin,densely dashed] (0,0,\cubez) -- ++(0,-\cubey,0) -- ++(0,0,-2*\cubez) -- ++(0,\cubey,0) ;
	\draw[very thin,densely dashed] (0,0,\cubez) -- ++(0,\cubey,0) -- ++(0,0,-2*\cubez) -- ++(0,-\cubey,0) ;	
	\draw[very thin,red] (0,0,\cubez) -- (0,0,-\cubez) ;

    \node[] at (-0*\cubex,1.7*\cubey,\cubez) {\small{$\Gamma_{34}$}};
    \node[] at (1*\cubex,-0.7*\cubey,\cubez) {\small{$\Gamma_{12}$}};
    \node[] at (2*\cubex,0.3*\cubey,\cubez) {\small{$\Gamma_{23}$}};
    \node[] at (-1*\cubex,0.7*\cubey,\cubez) {\small{$\Gamma_{41}$}};

    \node[red] at (0,0,\cubez) {\tiny{$\bullet$}};
    \node[red] at (0,0,-\cubez) {\tiny{$\bullet$}};  
      
\end{tikzpicture}
}
}
\subfloat[Coordinate system.]{ 
\hspace{1em}
\resizebox{!}{0.08\textwidth}{   
\begin{tikzpicture}

	\pgfmathsetmacro{\cubex}{1}
	\pgfmathsetmacro{\cubey}{1}
	\pgfmathsetmacro{\cubez}{1}
	
    \coordinate (o) at (0,-0.5\cubey,0);
    \draw[->] (o) --++ (0.5\cubex,0,0);
    \draw[->] (o) --++ (0,0.5\cubey,0);
    \draw[->] (o) --++ (0,0,0.5\cubez);
    \node[] at (0.75*\cubex,-0.5*\cubey,0) {\small{$x$}};
    \node[] at (0,0.25\cubey,0) {\small{$y$}};
    \node[] at (0,-0.5*\cubey,\cubez) {\small{$z$}}; 
   
\end{tikzpicture}
}
\hspace{1em}
}
\end{center}
\caption{Domain $\Omega$ divided into four subdomains.}
\label{fig:Sch3D4subd}
\end{figure}

In this specific configuration, we keep the same geometry as in the bidimensional case, and stretch it along the $z$ direction. This leads to a configuration with one interior "cross-edge" consisting of the points $(x,y,z)\in\{0\}\times\{0\}\times(-1,1)$, drawn in red in Figure \ref{fig:Sch3D4subd}, and four boundary cross-edges. Since there is no symmetry in $z$ in this geometric configuration, we consider even/odd symmetry with respect to $(x,y)$ instead of real three-dimensional even/odd symmetry. In order to avoid introducing new terminology, we will talk about the "even" and "odd" symmetric parts of the problems when referring to even/odd symmetry with respect to $(x,y)$, even though this is technically not precise. Hence, the even/odd decomposition considered in the three-dimensional case with four subdomains is the following: for any function $h$ in $L^p(\Omega$), we introduce for a.a. $(x,y,z)\in\Omega$
\begin{equation*}
	\begin{aligned}
		h_{\tilde{e}}(x,y,z) &:= \frac{1}{2}\left( h(x,y,z)+h(-x,-y,z) \right)\:, \\
		h_{\tilde{o}}(x,y,z) &:= \frac{1}{2}\left( h(x,y,z)-h(-x,-y,z) \right)\:,
	\end{aligned}
\end{equation*}
and use the splitting $h=h_{\tilde{e}}+h_{\tilde{o}}$ in order to define our new Dirichlet-Neumann method.

The definition of a compatible initial guess remains valid in this context. Only, here, the pointwise compatibility relations at boundary cross-points become equalities on the boundary cross-edges. Besides, if we introduce $\omega:=(-1,1)^2\subset \mathbb{R}^2$ such that $\Omega=\omega\times (-1,1)$, the tangential vector $\tau$ from Definition \ref{def:CompatibleIG} must be understood as the tangential vector to $\partial\omega$.
Moreover, in order for the proof of the 2D case to be applicable here, we need the traces of $u$ and $u^0$ on the plane $\{z=c\}$ to belong to $W^{2,p}(\omega\times \{c\})$, for any $c\in(-1,1)$. The trace theorem provides us with a necessary condition for this property to hold: $u$ and $u^0$ must belong to $W^{2+\frac1p,p}(\Omega)$.

\vspace{0.5em}
\textbf{Assumption 2.} The solution $u$ to \eqref{eqn:ModelPb} belongs to $ W^{2+\frac1p,p}(\Omega)$.

\begin{theorem}
	Under Assumption 2, for any $u^0\in W^{2+\frac1p,p}(\Omega)$ compatible with the boundary condition, our new Dirichlet-Neumann method applied to \eqref{eqn:ModelPb} in three-dimensions with four subdomains produces a sequence $\{u^k\}_k$ that converges geometrically to the solution $u$ in the $L^2$-norm and the broken $H^1$-norm, for any $\theta\in (0,1)$. Moreover, the convergence factor is given by $\vert 1-2\theta\vert$, which also proves that this method becomes a direct solver for the specific choice $\theta=\frac{1}{2}$.
    \label{thm:GeoCvgDirDN3D}
\end{theorem}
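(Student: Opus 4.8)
The plan is to reduce the three-dimensional problem to the two-dimensional one by slicing along planes $\{z=c\}$ and invoking the convergence result already established in Corollary \ref{cor:GeoCvgDirDN}. First I would observe that the Laplacian decouples in the sense that if $u$ solves \eqref{eqn:ModelPb} in $\Omega = \omega\times(-1,1)$ and we write $u(\cdot,\cdot,c)$ for the restriction to the plane $\{z=c\}$, then this restriction solves a two-dimensional Laplace-type problem in $\omega$ with right-hand side $f(\cdot,\cdot,c) + \partial_{zz} u(\cdot,\cdot,c)$ and boundary data inherited from $g$ on $\partial\omega\times\{c\}$. The same holds for every local iterate $u_i^k(\cdot,\cdot,c)$: because the transmission conditions of the new method only involve the $x$- and $y$-directions (Dirichlet traces and normal derivatives on the vertical interfaces $\Gamma_{ij}$, which are of the form $\{x=0\}$ or $\{y=0\}$ stretched in $z$), slicing commutes with the whole algorithm. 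Under Assumption 2 and the hypothesis $u^0\in W^{2+\frac1p,p}(\Omega)$, the trace theorem guarantees that $u(\cdot,\cdot,c)$ and $u^0(\cdot,\cdot,c)$ lie in $W^{2,p}(\omega)$ for a.e.\ $c$, so the regularity framework of section \ref{sec:Preliminaries} and the well-posedness result (Theorem \ref{thm:WellPosed}) apply slice by slice.

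The key steps, in order, are: (1) verify that for a.e.\ fixed $c\in(-1,1)$, the sliced data $f(\cdot,\cdot,c)+\partial_{zz}u(\cdot,\cdot,c)$ and the sliced boundary data satisfy the hypotheses of Theorem \ref{thm:RegDir}, \ref{thm:RegRob} or \ref{thm:RegMixed}, and that the sliced initial guess $u^0(\cdot,\cdot,c)$ is compatible with the boundary conditions in the sense of Definition \ref{def:CompatibleIG} — here one uses that the pointwise compatibility relations at boundary cross-points translate into the required conditions on the cross-edges, together with the fact that taking a $z$-slice of a $W^{2+\frac1p,p}$ function preserves the needed $W^{2,p}$ regularity and the trace/derivative identities at corners; (2) check that the error $e_i^k(\cdot,\cdot,c)$ in the 3D algorithm coincides with the error produced by the 2D algorithm applied to the sliced problem with sliced initial guess — this is immediate from linearity and from the fact that $\partial_{zz}$ cancels in the error equations (the sliced solution and its iterates share the same $f+\partial_{zz}u$ forcing on both sides); (3) apply Corollary \ref{cor:GeoCvgDirDN} to the sliced problem, obtaining $\|e_o^k(\cdot,\cdot,c)\|_{L^2(\omega)}\le C(c)\,|1-2\theta|^{k-1}$ and the analogous broken-$H^1$ bound, where $C(c)$ depends on the sliced data; (4) integrate in $c$, using $\|e_o^k\|_{L^2(\Omega)}^2 = \int_{-1}^1 \|e_o^k(\cdot,\cdot,c)\|_{L^2(\omega)}^2\,dc$ and a uniform (in $c$) bound on $C(c)$ coming from the $W^{2+\frac1p,p}(\Omega)$ control of $u$ and $u^0$ via the trace theorem, to recover the global geometric estimate; combine with the even symmetric part (Theorem \ref{thm:GeoCvgEvenDirDN}, whose proof extends verbatim by the same slicing argument) to conclude.

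The main obstacle I expect is step (2) made rigorous, i.e.\ justifying that the sliced iterates of the 3D method are genuinely the iterates of the 2D method and that no $z$-coupling sneaks in through regularity or well-posedness subtleties. Specifically, one must be careful that the 3D subproblems are first shown to be well-posed in $W^{2+\frac1p,p}(\Omega_i)$ (not merely $W^{2,p}$), so that their $z$-slices are well-defined elements of $W^{2,p}(\omega)$; this requires propagating the higher regularity of Assumption 2 through the iteration, essentially re-running the induction in the proof of Theorem \ref{thm:WellPosed} with $W^{2,p}$ replaced by $W^{2+\frac1p,p}$ and checking that the corresponding (higher-order) compatibility relations at the cross-edges are preserved by the convex combinations in the transmission conditions. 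Once that bookkeeping is done, the convergence itself is a direct corollary of the 2D result applied slice-wise and integrated, so the essential mathematical content is entirely contained in section \ref{sec:NewDN}; the 3D statement is a regularity-and-fibration argument rather than a new convergence analysis.
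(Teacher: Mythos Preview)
Your step (2) contains a genuine gap: the claim that ``$\partial_{zz}$ cancels in the error equations'' is false. The slice of the 3D iterate satisfies
\[
  -\Delta_{xy}\,u_i^k(\cdot,\cdot,c)=f(\cdot,\cdot,c)+\partial_{zz}u_i^k(\cdot,\cdot,c),
\]
whose forcing differs from that of the exact solution's slice by $\partial_{zz}e_i^k(\cdot,\cdot,c)$. This term is unknown, nonzero in general, and changes with $k$, so the sliced errors do \emph{not} solve the homogeneous 2D Laplace equation and the sliced iterates are not the iterates of any fixed 2D DN method. Slicing along $\{z=c\}$ does not commute with the algorithm: the 3D harmonicity couples all $z$-levels, and there is no reason for a single slice of a 3D harmonic function to be 2D harmonic.

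The paper circumvents this by a spectral decomposition in $z$ rather than pointwise slicing. Because the errors $e_i^k$ are harmonic in $\Omega_i$ with homogeneous conditions $\mathcal{B}e_i^k=0$ on the two $z$-faces, one expands $e_i^k(x,y,z)=\sum_{m\ge1}\widehat{e}_i^k[m](x,y)\,v_m(z)$ in the eigenfunctions of the Sturm--Liouville problem $-v''=\lambda v$ on $(-1,1)$ with those boundary conditions. Then $\partial_{zz}$ becomes multiplication by $-\lambda_m$ on each mode, and every Fourier coefficient satisfies the genuinely decoupled 2D equation $(-\Delta_{xy}+\lambda_m)\,\widehat{e}_i^k[m]=0$ in $\omega_i$, with the same transmission conditions on $\gamma_{ij}$ as in the 2D method. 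Since $\lambda_m\ge0$, the operator $-\Delta+\lambda_m$ is coercive and preserves the even/odd symmetry structure, so the 2D proofs (Theorems \ref{thm:GeoCvgEvenDirDN} and \ref{thm:GeoCvgOddDirDN}) apply verbatim to each mode and give $\widehat{e}^k[m]=(1-2\theta)^{k-1}\widehat{e}^1[m]$ for every $m$; resumming yields $e^k=(1-2\theta)^{k-1}e^1$. The essential missing idea in your proposal is this Fourier diagonalisation of $\partial_{zz}$, which is precisely what turns the coupled slices into independent 2D problems.
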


\begin{proof}
	 This can be proved using separation of variables in the $z$ direction in order to recover the two-dimensional case and apply Corollary \ref{cor:GeoCvgDirDN}. As in the proofs of Theorem \ref{thm:GeoCvgEvenDirDN} and Theorem \ref{thm:GeoCvgOddDirDN}, we write the problem in terms of the local errors $e_i^k$ for $i\in\mathcal{I}$ and $k\geq 1$. Due to the geometry of the domain decomposition, each local error is harmonic in $\Omega_i$ and it satisfies on the $z$ faces
\begin{equation*}
	\mathcal{B}e_i^k(x,y,-1) = \mathcal{B}e_i^k(x,y,1) = 0\:, \quad \forall (x,y) \in \omega\:.
\end{equation*}
Using the separation of variables approach, we search for solutions under the form $E(x,y)F(z)$. This leads to solving a Sturm-Liouville problem for $F$. The associated pairs of eigenvalues/eigenfunctions $(\lambda_m,v_m)$ for $m\geq 1$ are such that the $v_m$ form a orthogonal basis of $L^2(-1,1)$, which enables us to write $e_i^k$ using a series expansion
\begin{equation*}
	e_i^k(x,y,z) = \sum_{m\geq 1} \widehat{e}_i^k[m](x,y) v_m (z) \quad \mbox{ for a.e. } (x,y,z)\in\Omega \:.
\end{equation*}
Therefore, rewriting the equation $-\Delta e_i^k = 0$ in $\Omega_i$ using that $-v_m''=\lambda_m v_m$ in $(-1,1)$, we end up with
\begin{equation*}
	-\Delta \widehat{e}_i^k + \lambda_m\widehat{e}_i^k = 0 \: \mbox{ in } \: \omega_i := \Omega_i\cap\{z=0\}\:, \quad \forall m\geq 1\:,
\end{equation*}
where we dropped the dependance on $m$ in $\widehat{e}_i^k $ for simplicity.
On the other hand, given that we consider even/odd symmetry with respect to $(x,y)$ only, we obtain for $e_i^k$ the decomposition
\begin{equation*}
		e_i^k = e_{\tilde{e},i}^k + e_{\tilde{o},i}^k  = \sum_{m\geq 1} \widehat{e}_{e,i}^k v_m + \sum_{m\geq 1} \widehat{e}_{o,i}^k v_m \:.
\end{equation*}
It follows that studying the error $e^k$ for the 3D modified Dirichlet-Neumann method amounts to studying the error $\widehat{e}^k$ for the 2D modified Dirichlet-Neumann method for all $m\geq 1$. For each $m\geq 1$, the problems solved by the local errors $\widehat{e}_{e,i}^k$ and $\widehat{e}_{o,i}^k$ are the same as the ones solved by the $e_{e,i}^k$ and $e_{o,i}^k$ in the 2D case, except from the following modifications:
\begin{itemize}
	\item[(i)] the operator $-\Delta$ is replaced by $-\Delta+\lambda_m$;
	\item[(ii)] the local subdomains $\Omega_i$ are replaced by $\omega_i$;
	\item[(iii)] the local interfaces $\Gamma_{ij}$ are replaced by $\gamma_{ij}:=\Gamma_{ij}\cap\{z=0\}$.
\end{itemize}
First, let us recall that the $\lambda_m$ are non-negative for each $m\geq 1$, therefore for each $i\in\mathcal{I}$ the local boundary value problem associated to the operator $-\Delta+\lambda_m$ is well-posed in $\omega_i$. In addition, the following properties hold: the operator $-\Delta+\lambda_m$ preserves symmetry in the sense of \cite[Definition 3]{chaudet2022cross1}, $u\vert_\omega\in W^{2,p}(\omega)$, and $u^0\vert_\omega$ is compatible with the boundary conditions on $\partial\omega$. Thus the arguments in the proofs of Theorem \ref{thm:GeoCvgEvenDirDN} and Theorem \ref{thm:GeoCvgOddDirDN} apply. And we obtain the expected behaviours for the even/odd recombined errors in $\omega$ for each $m\geq 1$, namely
\begin{equation*}
	\widehat{e}_{e}^k = (1-2\theta)^{k-1}\widehat{e}_{e}^1 \quad \mbox{ and } \quad \widehat{e}_{o}^k = (1-2\theta)^{k-1}\widehat{e}_{o}^1 \:.
\end{equation*}
Therefore, by linearity, we finally get
\begin{equation*}
	e^k = (1-2\theta)^{k-1} \sum_{m\geq 1} \widehat{e}^1 v_m = (1-2\theta)^{k-1} e^1\:,
\end{equation*}
which yields the desired result.
\end{proof}

\begin{remark}
	Note that, as in the two-dimensional case, the standard Dirichlet-Neumann method applied to the odd-symmetric part of \eqref{eqn:ModelPb} is not well-posed and it generates iterates that are singular in the neighbourhood of the cross-edge. This can be proved using the separation of variables introduced in the previous proof. As mentionned above, studying the odd symmetric 3D case amounts to studying the 2D case for the local iterates $\widehat{e}^k_{o,i}$, for all $m\geq 1$. We can thus follow the same steps as in the proof of \cite[Theorem 7]{chaudet2022cross1}, and conclude that for each $m$, there exists some iteration $k_0$ such that the iterates $\widehat{e}^{k_0}_{o}$ are not unique, and that all possible iterates are singular near the cross-point, with a leading singularity of type $(\ln r)^2$. 
\end{remark}

\section{Numerical experiments}
\label{sec:Numerics}

We now illustrate our theoretical results with numerical experiments in two and three dimensions. The domain $\Omega$ is discretized using a regular grid of size $h$, and our numerical method is based on a standard five-point finite difference scheme in the 2D case, and on its nine-point version in the 3D case. In problems with mixed boundary conditions (Dirichlet and Robin), the Dirichlet boundary condition is enforced weakly using a penalty parameter of order $10^{-12}$. Unless otherwise stated, the mesh size will be set to $h=2\cdot10^{-2}$ in all 2D experiments, and to $h=6\cdot10^{-2}$ in all 3D experiments. In addition, for a given mesh size $h$, the relative numerical error is computed taking $u_{ex}$ as the discrete solution obtained when solving on the whole domain $\Omega$ with a direct solver.

\subsection{Example 1}

This first example is to illustrate the result of
Theorem \ref{thm:GeoCvgEvenDirDN}, i.e. convergence for the even symmetric part in the 2D case. We consider the square domain $\Omega=(-1,1)^2$, and we take the source term $f=1$ in $\Omega$, with symmetric mixed Dirichlet-Robin boundary conditions: $u=0$ on $\partial\Omega_l\cup\partial\Omega_r$ and $\partial_n u=0$ on $\partial\Omega_b\cup\partial\Omega_y$. A simple compatible initial guess (in the sense of Definition \ref{def:CompatibleIG}) in this case is $u^0=0$ in $\Omega$. 
\begin{figure}
  \centering
    \subfloat[Error at iteration 2, $\theta=0.5$.]{
    \includegraphics[width=0.47\textwidth]{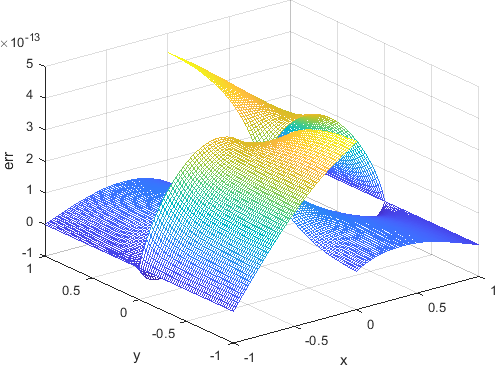}
    \label{sub:ErrTh05Ex1}
    }
    \subfloat[$L^2$-norm of the error.]{
    \includegraphics[width=0.47\textwidth]{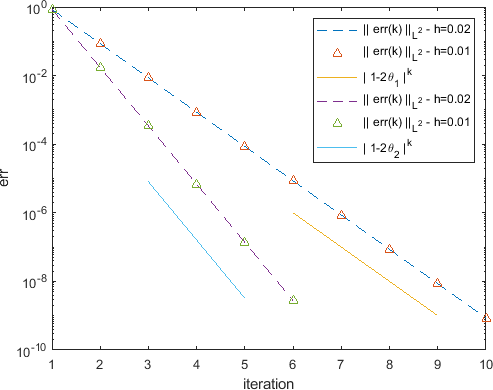}
    \label{sub:CvgTh045Ex1}
    }
    \hspace{.1em}
    \subfloat[Error at iteration 1, $\theta=0.45$.]{
    \includegraphics[width=0.47\textwidth]{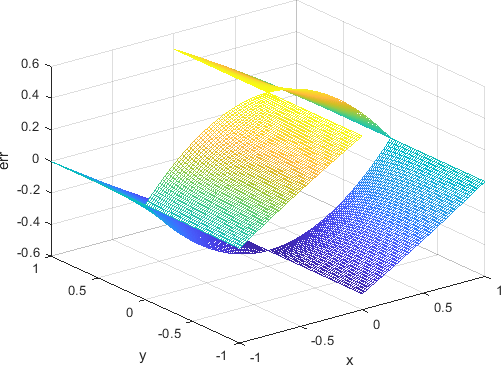}
    \label{sub:Err1Th045Ex1}
    }
    \subfloat[Error at iteration 7, $\theta=0.45$.]{
    \includegraphics[width=0.47\textwidth]{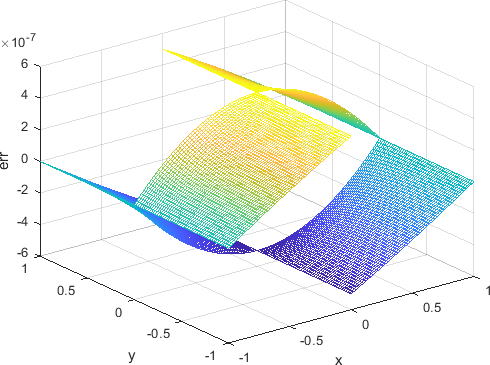}
    \label{sub:Err7Th045Ex1}
    }
  \caption{Results for the new DN method applied to \eqref{subeqn:EvenModelPb} in 2D (Example 1).}
  \label{fig:Example1}
\end{figure}
The results confirm that, for $\theta=\frac{1}{2}$, the new DN method is a direct
solver. Indeed, Figure \ref{sub:ErrTh05Ex1} shows that the error at iteration 2 is "zero" (here it cannot be
much smaller than $10^{-12}$ since we penalize Dirichlet boundary conditions). In the case $\theta\neq\frac{1}{2}$, we see from Figure \ref{sub:Err1Th045Ex1} and Figure \ref{sub:Err7Th045Ex1} that the error in $\Omega$ is multiplied by a constant from one iteration to the next, which is consistent with the recurrence relation in the proof of \cite[Theorem 6]{chaudet2022cross1}. We also see that, as predicted by the proof of Theorem \ref{thm:GeoCvgEvenDirDN}, at each iteration, the error remains continuous at the cross-point from $\Omega_1$ to $\Omega_3$, and from $\Omega_2$ to $\Omega_4$, and it has the expected symmetry property. Finally, we also observe that the new DN method converges geometrically with the expected convergence factor, see Figure \ref{sub:CvgTh045Ex1} where we have taken different values for $\theta$: $\theta_1=0.45$ and $\theta_2=0.49$. Also note that, as predicted by the theory, the convergence behaviour of the method does not depend on the meshsize $h$. Indeed, we can see on the graph that the error curves for $h=0.02$ and $0.01$ are almost overlaid on each other.

\subsection{Example 2}

We illustrate now the result of Theorem \ref{thm:GeoCvgOddDirDN} for the odd symmetric part in the 2D case. We choose the same domain  and boundary conditions as in Example 1, but here we set the source term $f=\sin(\pi x)\cos(\frac{\pi}{2}y)$ in $\Omega$. The same compatible initial guess $u^0=0$ in $\Omega$ is considered. As shown in Figure \ref{fig:Example2},
\begin{figure}
  \centering
    \subfloat[Error at iteration 2, $\theta=0.5$.]{
    \includegraphics[width=0.47\textwidth]{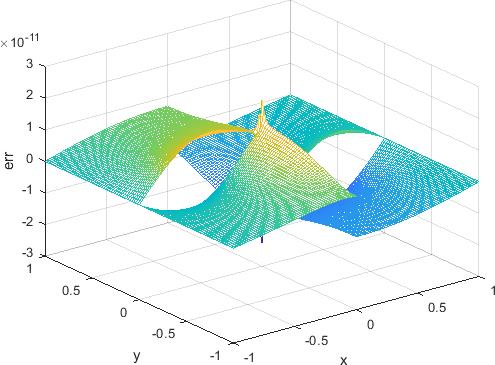}
    \label{sub:ErrTh05Ex2}
    }
    \subfloat[$L^2$-norm of the error.]{
    \includegraphics[width=0.47\textwidth]{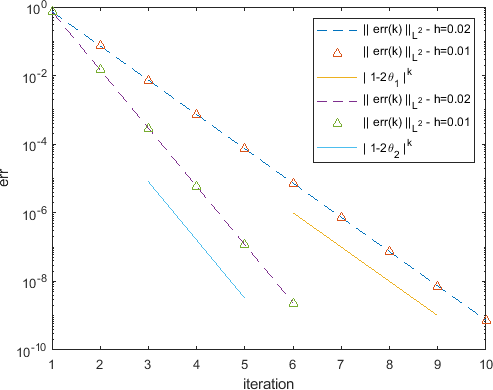}
    \label{sub:CvgTh045Ex2}
    }
    \hspace{.1em}
    \subfloat[Error at iteration 1, $\theta=0.45$.]{
    \includegraphics[width=0.47\textwidth]{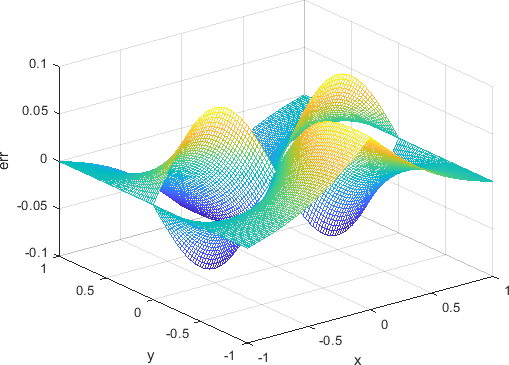}
    \label{sub:Err1Th045Ex2}
    }
    \subfloat[Error at iteration 7, $\theta=0.45$.]{
    \includegraphics[width=0.47\textwidth]{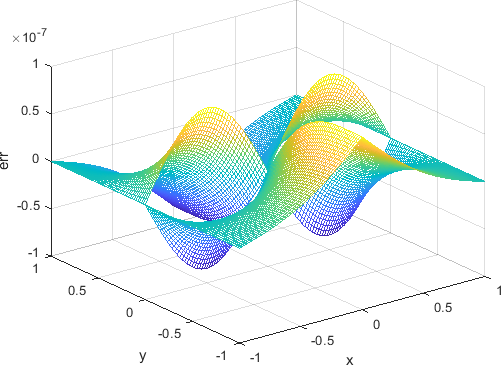}
    \label{sub:Err7Th045Ex2}
    }
  \caption{Results for the new DN method applied to \eqref{subeqn:OddModelPb} in 2D (Example 2).}
  \label{fig:Example2}
\end{figure}
the new transmission conditions proposed in Section \ref{sec:NewDN} enable us to recover the same convergence behaviour as for the even symmetric case. In particular, the new DN method becomes a direct solver when $\theta$ is set to $\frac{1}{2}$ (see Figure \ref{sub:ErrTh05Ex2}), and for other choices of $\theta$, it converges geometrically with the expected convergence factor $(1-2\theta)$, as shown in Figure \ref{sub:CvgTh045Ex2} for $\theta_1=0.45$ and $\theta_2=0.49$. Moreover, the convergence behaviour does not depend on $h$. Figures \ref{sub:Err1Th045Ex2} and \ref{sub:Err7Th045Ex2} also illustrate the recurrence relation and the symmetry properties of the error obtained in the proof of Theorem \ref{thm:GeoCvgOddDirDN}.

\subsection{Example 3}

In order to validate the results stated in Theorem \ref{thm:GeoCvgDirDN3D} for the 3D case, we consider the cube $\Omega=(-1,1)^3$ divided into four subdomains as in Figure \ref{fig:Sch3D4subd}. In this first 3D-example, we study the even symmetric case and choose $f=1$ in $\Omega$ with a homogeneous Dirichlet boundary condition $u=0$ everywhere on $\partial\Omega$. We start the iterative process with the compatible intial guess $u^0=0$ in $\Omega$.
\begin{figure}
  \centering
    \subfloat[Error at iteration 2, $\theta=0.5$.]{
    \includegraphics[width=0.47\textwidth]{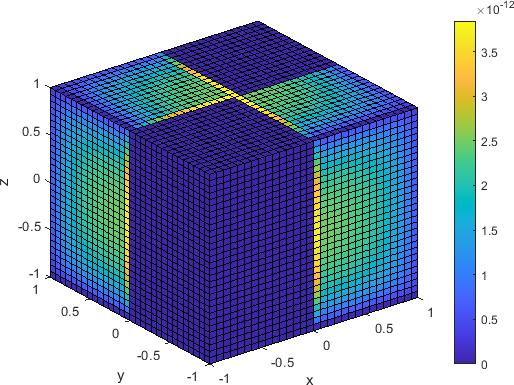}
    \label{sub:ErrTh05Ex3}
    }
    \subfloat[$L^2$-norm of the error.]{
    \includegraphics[width=0.47\textwidth]{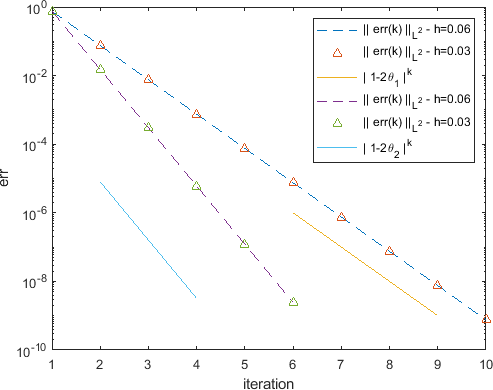}
    \label{sub:CvgTh045Ex3}
    }
  \caption{Results for the new DN method applied to \eqref{subeqn:EvenModelPb} in 3D (Example 3).}
  \label{fig:Example3}
\end{figure}
Similarly to the 2D case, we get that for $\theta=\frac12$, the error reduces to zero after two iterations, see Figure \ref{sub:ErrTh05Ex3}. Here, the error is actually exactly zero in $\Omega_1$ and $\Omega_3$ since the subproblems associated to these subdomains only have Dirichlet boundary conditions, which are thus enforced exactly and not using a penalty technique. Furthermore, for $\theta\neq\frac12$, the new DN method converges geometrically with the expected convergence factor, and the convergence is independent of the meshsize, as shown in Figure \ref{sub:CvgTh045Ex3} for $\theta_1=0.45$, $\theta_2=0.49$ and $h\in\{ 0.03, 0.06\}$.

\subsection{Example 4}

This last example is dedicated to the odd symmetric part in the 3D case. Therefore, we set $f=\sin(\pi x)y^2z$ in $\Omega$, and the same homogeneous Dirichlet boundary condition as in Example 3, which allows us to keep $u^0=0$ as compatible initial guess. 
\begin{figure}
  \centering
    \subfloat[Error at iteration 2, $\theta=0.5$.]{
    \includegraphics[width=0.47\textwidth]{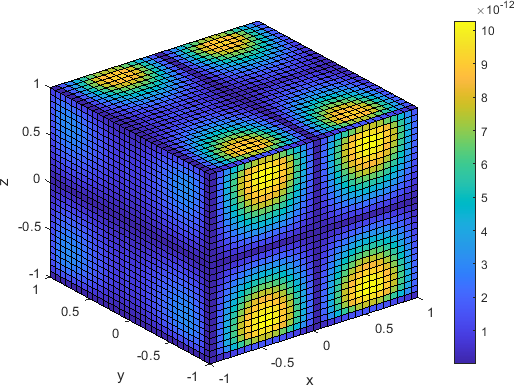}
    \label{sub:ErrTh05Ex4}
    }
    \subfloat[$L^2$-norm of the error.]{
    \includegraphics[width=0.47\textwidth]{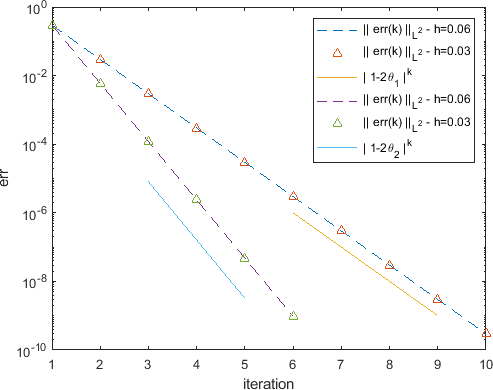}
    \label{sub:CvgTh045Ex4}
    }
  \caption{Results for the new DN method applied to \eqref{subeqn:OddModelPb} in 3D (Example 4).}
  \label{fig:Example4}
\end{figure}
In this case as well, the convergence properties predicted by the theory are observed numerically, as shown in Figure \ref{sub:ErrTh05Ex4} for $\theta=\frac12$ and Figure \ref{sub:CvgTh045Ex4} for $\theta\neq\frac12$, where again $\theta_1=0.45$ and $\theta_2=0.49$.

\section{Conclusion}

In this paper, we completed the analysis of the DN method started in \cite{chaudet2022cross1}. First, we showed that the idea of an even/odd symmetric decomposition could be extended to the case of mixed Dirichlet-Robin boundary conditions, and to the three-dimensional case. Based on this decomposition, we proved that, for the even symmetric part of the problem, the original DN method was geometrically convergent with a convergence factor independent of $h$ in all aforementioned cases, which generalizes the results in \cite{chaudet2022cross1}. Then, we introduced a new variant of the DN method based on a different distribution of the Dirichlet/Neumann transmission conditions and specifically tuned for dealing with odd symmetric functions. For the odd symmetric part of the problem, we proved that this new variant has the same convergence properties as the original DN method applied to the even symmetric part. Finally, we illustrated our theoretical results with numerical experiments in two and three dimensions.

A first direction of future work is to study how the new DN method extends to partitions with more than four square subdomains. Also, since the analysis presented here is limited to the case of rectilinear cross-points, it would be interesting to generalize it to more complicated cross-points, with non right angles and possibly involving a number of subdomains $N\neq 4$.

\bibliographystyle{acm}
\bibliography{references}

\end{document}